\documentclass[11pt]{amsart}
\usepackage{graphicx}
\usepackage{tikz}
\usepackage{latexsym,color,amsmath,systeme, amsthm,amssymb,amscd,amsfonts}
\usepackage{graphicx}
\usepackage{todonotes}

\usepackage{hyperref}
\usepackage{pgf,tikz,pgfplots, pgfplotstable}
\pgfplotsset{compat=1.14}
\usepackage{float}
\usepackage{mathrsfs}
\usetikzlibrary{arrows}
\usepackage[style=english]{csquotes}
\usepackage{enumerate}
\usepackage{lscape}
\usepackage[width=\textwidth]{caption}
\usepackage{subcaption}
\usepackage{amsmath}
\usepackage{amsfonts}
\usepackage{amssymb}
\usepackage{pgf,tikz}
\usepackage{tkz-tab}
\usepackage{pgfplots}
\usepackage{siunitx}
\usetikzlibrary{angles, quotes}
\usepackage{tikz-3dplot}
\usetikzlibrary{positioning, arrows.meta, calc}
\usepackage{amsmath, amssymb, amsfonts}
\usepackage{amssymb,amsthm,bbm,listings,todonotes,url,xcolor}
\usetikzlibrary{decorations.pathreplacing,matrix}

\usetikzlibrary{decorations.pathmorphing}
\tikzset{discont/.style={decoration={zigzag,segment length=12pt, amplitude=4pt},decorate}}
\pgfplotsset{compat=1.11}
\usetikzlibrary{intersections,fillbetween}
 \usetikzlibrary{decorations.text}
\usetikzlibrary{shapes,snakes,arrows,intersections, backgrounds}
\usetikzlibrary{scopes,svg.path,shapes.geometric,shadows}
\DeclareCaptionSubType*[arabic]{figure}
\tikzset{% was glyellow
	v1/.style={line width=.5pt,blue!33!black},
	v2/.style={line width=.5pt,blue!66!black},
	v3/.style={line width=.5pt,blue!33},
	v4/.style={line width=.5pt,blue!66},
	v5/.style={line width=.5pt,black}
}%https://sharelatex.tum.de/7368283976cwbyxdkytxyd

\definecolor{dred}{HTML}{C11B17}
\definecolor{dgreen}{HTML}{41A317}
\definecolor{dblue}{HTML}{00008B}
\definecolor{niceblue}{HTML}{00008B}
\definecolor{brilliantrose}{rgb}{1.0, 0.33, 0.64}
\definecolor{gold}{HTML}{ffd700}
\definecolor{lgold}{HTML}{ffe140}

\newcommand{\R}{\mathbb R}
\newcommand{\K}{\mathcal K}
\newcommand{\F}{\mathcal F}
\newcommand{\cV}{\mathcal V}
\newcommand{\VP}{\mathcal{VP}}
\newcommand{\cK}{\mathcal K}
\newcommand{\Z}{\mathcal Z}
\newcommand{\N}{\mathbb N}

\newcommand{\Pro}{\mathbb P}

\newcommand{\conv}{\mathrm{conv}}

\def\vol{{\rm vol}}
\def\Vol{{\rm Vol}}

\definecolor{zzttqq}{rgb}{0.6,0.2,0}
			\definecolor{ccqqqq}{rgb}{0.8,0,0}
\definecolor{ffvvqq}{rgb}{1,0.3333333333333333,0}
\definecolor{zzffqq}{rgb}{0.6,1,0}
\definecolor{qqwuqq}{rgb}{0,0.39215686274509803,0}
\definecolor{ffzzqq}{rgb}{1,0.6,0}
\definecolor{ffqqqq}{rgb}{1,0,0}
\definecolor{zzttqq}{rgb}{0.6,0.2,0}
\definecolor{uuuuuu}{rgb}{0.26666666666666666,0.26666666666666666,0.26666666666666666}

\newtheorem{thm}{Theorem}[section]
\newtheorem{lemma}[thm]{Lemma}

\newtheorem{cor}[thm]{Corollary}

\theoremstyle{definition}

\theoremstyle{remark}
\newtheorem{rmk}[thm]{Remark}

\newcommand{\Det}{\operatorname{Det}}

\newcommand{\Gr}{\operatorname{Gr}}

\newcommand{\onevec}{\mathbbm{1}}

\date{\today} 

\definecolor{gcolor}{RGB}{200,255,200} % Gena - green
\definecolor{kcolor}{RGB}{245,225,215} % Katja - khaki rose 
\definecolor{vcolor}{RGB}{240,230,255} % Vanya - violet 

\usetikzlibrary{calc,intersections}
\begin{document}
%%%%%%%%%%%%%%%%%%%%%%%%%%%%%%%%%%%%%%%%%%%%%

%\title{Mixed Volume Inequalities for Zonoids}

\title[Mixed Volume Inequalities for Zonoids]{On the Log-submodularity for zonoids: \\ from Mixed Volume inequalities to the Hypercube}

\subjclass[2020]{Primary 52A39; Secondary 52A40, 14M15, 14Q30} 

\keywords{copositivity, Grassmannian, log-concavity, log-submodularity, mixed volumes, projection inequalities, volume polynomial, zonoids, zonotopes}

\author[G. Averkov]{Gennadiy Averkov}
\address{Fakult\"at 1, BTU Cottbus-Senftenberg, Platz der Deutschen Einheit 1, 03046 Cottbus, Germany}
\email{averkov@b-tu.de}

\author[K. von Dichter]{Katherina von Dichter}
\email{vondicht@b-tu.de}

\author[I. Soprunov]{Ivan Soprunov}
\address{Department of Mathematics and Statistics, Cleveland State University,  2121 Euclid Ave, Cleveland, Ohio, 44115 USA}
\email{i.soprunov@csuohio.edu}

\date{\today}\maketitle

\begin{abstract}
%We prove a log-submodularity-type inequality for zonoids in $\mathbb{R}^4$, extending the three-dimensional result of Fradelizi, Madiman, Meyer, and Zvavitch. The inequality is equivalent to a Bézout-type inequality for mixed volumes, local Loomis–Whitney-type inequality, and an Aleksandrov–Fenchel type estimate, thereby unifying several geometric perspectives. Our proof reduces the problem to a polynomial with an combinatorial nature inequality on the vertices of a cube. This establishes also unexpected connections between mixed volumes of zonoids, matroid theory, and real algebraic geometry.
We prove a log-submodularity-type inequality for zonoids in $\mathbb{R}^4$, extending the three-dimensional result of Fradelizi, Madiman, Meyer, and Zvavitch. More generally, we conjecture a log-submodularity-type inequality for zonoids in arbitrary dimension. This inequality  admits several equivalent formulations in terms of volumes of coordinate projections as well as in terms of mixed volumes, 
thereby unifying several geometric perspectives. We reduce the conjectured inequality to a polynomial inequality whose variables are associated with the vertices of a hypercube and whose coefficients encode the volumes of 0/1 simplices. This reduction reveals unexpected connections between mixed volumes of zonoids, matroid theory, and real algebraic geometry.

\end{abstract}

\vspace{3mm}

\section{Introduction}\label{sec:IntrNotMain}

Diagrams aim to provide an exhaustive description of relations between a given set of functionals, with the relations usually formulated in terms of inequalities. The problem of describing diagrams for typical functionals in convex geometry, such as volume, surface area, width, diameter, inradius, circumradius, and perimeter, was raised by Santal\'o \cite{Santalo61}, based on the earlier ideas of Blaschke~\cite{Bl}. %, who studied the possible values of volume, surface area, and integral mean curvature for three-dimensional convex bodies, Santal\'o proposed the systematic investigation of such diagrams for planar convex sets. 
The study of Blaschke-Santal\'o diagrams remains a highly active area of research to this day, with numerous open problems still unresolved (see, e.g.,~\cite{SY, HC}). Recently there have been different contributions to the understanding of these diagrams, both theoretical and computational (see, e.g.,~\cite{BG, BGM, BGR1, BR, BGR2, FLP, Fto, FHL, FMZ, SZ}).  Even before Santal\'o, Heine \cite{H} had suggested to study the diagrams for the system of mixed volumes of convex bodies. Shephard \cite{Sh} generalized Heine’s determinantal inequalities to arbitrary dimensions and revealed that the classical Minkowski, Aleksandrov-Fenchel and determinantal inequalities are in general not enough to describe all possible relations between mixed volumes, even for planar convex bodies. This observation laid the foundation for the Heine-Shephard problem, which asks for a complete system of all 
relations between mixed volumes of $k$ convex bodies in $\R^n$, for given $k$ and $n$.

Equivalently, this can be seen as the problem of characterizing families of volume polynomials, a problem that has attracted attention in the algebraic combinatorics community since the seminal work of Brendan and Huh on Lorentzian polynomials \cite{BH}, where they, in particularly, have shown that Lorentzian polynomials form a strictly larger class than volume polynomials of arbitrary convex bodies. In the present paper, we do a similar comparison, contrasting volume polynomials of general convex bodies with those of zonoids. Thus, our focus lies on studying of diagrams for the mixed volumes of systems of zonoids and, sometimes more specifically, of zonotopes. Recall that a convex body is a {\it zonotope} if it is the Minkowski sum of a finite number of line segments, called {\it generators} of the zonotope. Limits of zonotopes in the Hausdorff metric are called {\it zonoids}.

We denote the family of all convex bodies in $\R^n$ by $\cK^n$ and the family of all zonoids in $\mathbb{R}^n$ by $\Z^n$. 
Furthermore, we use $\Z^n_m$ to denote the family of all zonotopes in $\mathbb{R}^n$ generated by at most $m$ segments. 
Given $\ell$ families $\F_1,\dots, \F_\ell$ of convex bodies in $\R^n$, define the {\it volume polynomial diagram}
$$
\VP(\F_1,\dots, \F_\ell)=\{\vol(x_1K_1+\dots+x_\ell K_\ell) \ : \ K_i\in\F_i,\ 1\leq i\leq \ell\},
$$
where $\vol$ denotes the Euclidean volume in $\R^n$ and $x_1,\dots, x_\ell$ are nonnegative scaling factors.
The coefficients of the volume polynomial $\vol(x_1K_1+\dots+x_\ell K_\ell)$
are all possible mixed volumes involving $K_1,\dots, K_\ell$, see Section~\ref{sec:def}. 

Let us fix a family $\F$ of convex bodies in $\R^n$. It defines three diagrams $\VP(\underbrace{\F,\dots, \F}_{\ell})$, 
$\VP(\F,\underbrace{\Z_1^n,\dots, \Z_1^n}_{k})$, and $\VP(\F, \{[0,e_1]\},\dots, \{[0,e_n]\})$, where $[0,e_i]$ denotes the unit line segment 
generated by the $i$-th standard basis vector $e_i$.
%, where by abuse of notation we write $[0,e_i]$ instead of the one-element family $\{[0,e_i]\}$. 
Describing the first diagram corresponds to the Heine-Shephard problem for the family $\F$, whereas describing the
third diagram corresponds to relating the volumes of all coordinate projections for bodies in $\F$. The second 
diagram is a coordinate-free analog of the third diagram. In particular, for $k=n$, the second diagram contains the third one. The above three diagrams 
can be considered for the following natural choices of $\F$: all convex bodies $\cK^n$, all zonoids $\Z^n$, and
zonotopes with a bounded number of generators $\Z^n_m$. Our focus in this paper is on $\F=\Z^n$ and $\F=\Z^n_m$. Observe that the diagrams for $\F=\Z^n$ can be obtained as a limit case of the corresponding diagrams for $\F=\Z^n_m$, as $m\to\infty$.

Projection inequalities bounding the volume of a body by its projection volumes date back to the Loomis--Whitney inequality~\cite{LW}
\[
\vol(K)^{n-1} \le \prod_{i=1}^{n} \vol(P_{e_i^\perp} K),
\]
for any $K\in\cK^n$. Here $P_{e_i^\perp}K$ denotes the projection of $K$ onto the coordinate hyperplane orthogonal to $e_i$.
Extensions of this inequality were obtained by Bollobás and Thomason~\cite{BT}. Also, local $s$-cover versions were later established in~\cite{ABBC, MNZ}. Dually, Meyer~\cite{M} proved a lower bound via sections, generalized by Liakopoulos~\cite{L} and Alonso-Gutiérrez et al.~\cite{ABBC, AAGJMR}. Recently, a systematic treatment of these local inequalities and their functional counterparts was given by Alías, González Merino, and Marín Gimeno~\cite{AGM}. 
All these results provide partial description for the general diagrams $\VP(\K^n,\underbrace{\Z_1^n,\dots, \Z_1^n}_{k})$ and $\VP(\K^n, \{[0,e_1]\},\dots, \{[0,e_n]\})$.

In the more restricted case of zonoids, it was conjectured by Fradelizi, Madiman, Meyer, and Zvavitch~\cite{FMMZ} that there are stronger structural inequalities valid for the diagram $\VP(\Z^n, \{[0,e_1]\},\dots, \{[0,e_n]\})$, the so called {\it log-submodularity} property.

Recall that $f : 2^{ [n]} \to \R$ is called {\it submodular} if the inequality 
\[
	f(I \cap J) + f(I \cap J) \le f(I) + f(J) 
\]
holds for all $I, J \subseteq [n]$. Consider an $n$-dimensional zonoid $A$  and, for every subset $I \subseteq [n]:=\{1,\ldots,n\}$, let $P_I$ be the projection onto the coordinate subspace indexed by $I$. The log-submodularity conjecture from~\cite{FMMZ}  claims that the map $f(I) =  \log \vol(P_I A)$ is a submodular function. 

The log-submodularity conjecture %in
has a natural information-theoretic meaning in statistics via the notion of Vitale zonoids~\cite{V}, an object capturing interrelations of a system of $n$ random variables (see ~\cite{CC,DCT,Md}). Beyond the statistics, the submodularity phenomena have been observed in a broad range of areas of mathematics, including combinatorics, algebraic geometry, and metric %convex 
geometry. Currently, experts try to identify hidden connections between objects from different areas of mathematics that possess the submodularity property. Thus, %our hope is 
we believe that understanding relations of mixed volumes for systems of convex bodies and, more specifically, for systems of zonoids, would not only have an impact %inside convexity 
within convex geometry but also in other related areas, such as probability theory~\cite{BBLM}, information theory ~\cite{CC, DCT, Md}, toric geometry~\cite{Ful, CLS}, algebraic combinatorics~\cite{BH, AH, Mur, Bjo} etc.
%\Ivan{References?}

The first non-trivial case of the log-submodularity conjecture is the case of dimension $n=3$. It was proved in the original paper \cite{FMMZ} and, with an algebraic-combinatorial approach, in \cite{AS}. %Very recently, the submodularity conjecture was disproved in dimension $n=4$ by displaying an explicit zonotope $A$ with $6$ generators, see \cite{FHMNZ}. 
Very recently, the submodularity conjecture was disproved in any dimension $n \geq 4$ for $n$-dimensional zonotopes $A$ with at least $m = n+2$ generators, independently in \cite{FHMNZ} and \cite{Sko}.
%    \Ivan{Move the sentence about counterexamples to an earlier place.}
This indicates the need for other inequalities that relate volumes of the coordinate projections of $A$. While the log-submodularity inequality is not true in general, there is still a chance that certain inequalities of submodularity type are satisfied. It is clear that if a certain inequality holds for every zonotope, then it also holds for all zonoids (by taking the limit). So, one can consider a zonotope $A$ with a fixed but arbitrary number $m \ge n$ of generators and then try to derive the inequality in that discrete setting.  This was the approach in \cite{FMMZ} and in \cite{AS} for the case of $n=3$. In particular, in \cite{AS} it was demonstrated that for $n=3$, the case of an arbitrary $m \ge 3$ can be reduced to the case $m=4$. Note also that in \cite{FMMZ} the  log-submodularity conjecture was verified for $m=n$ and an arbitrary $n \geq 3$. Thus, it is natural to have the number of generators $m$ of a zonotope $A$ as an additional parameter along with the dimension $n$. 

In view of the above comments, we consider the following setting. First note that, for a given $n$, the set of all submodular functions $f : 2^{[n]} \to \R$ forms a polyhedral cone of dimension $2^n$. Submodular functions are also called {\it polymatroids}, because the rank function of a matroid is submodular. Since, in general, the submodularity property is not satisfied for $f(I) = \log \vol P_I(A)$, we may relax the setting: rather than considering all of the $2^n$ subsets of $[n]$, we make a suitable choice of $\ell$ such sets $S_1,\ldots, S_\ell \subseteq [n]$ and try to relate $f(S_1) ,\ldots, f(S_\ell)$ by a system of linear inequalities. 

In order to study submodularity-type inequalities, we introduce the notion of a partial polymatroid. A function $f$ on a family of sets $\{S_1,\ldots,S_\ell\}$ is called a {\it partial polymatroid} if it can be extended to a polymatroid $2^{[n]} \to \R$. This corresponds to projecting the $2^n$-dimensional cone of the polymatroids onto the $\ell$-dimensional polyhedral cone, which ``stores'' the information on the dependency of $f(S_1),\ldots, f(S_\ell)$. 

We choose the family of index sets 
    \[
    \{\{n\},\{1,n\},\{2,n\},\ldots, \{n-1,n\},\{1,2,\ldots,n\}\}
    \]
    and conjecture that  $f(I) = \log \vol P_I(A)$ is a partial polymatroid on this family. This corresponds to the inequality 
    \begin{equation}\label{eq:partial_polymatroid}
    f(\{1,\dots,n\})+(n-2)f(\{n\})
    \le f(\{1,n\})+f(\{2,n\})+\cdots+f(\{n-1,n\}).
    \end{equation}

%\[
%    \vol(A) \, 
%    \vol\!\bigl( P_{[b_1,\dots, b_{n-1}]^\perp} A \bigr)^{\,n-2}
%    \;\leq\;
%    \prod_{i=1}^{n-1}
%    \vol\!\bigl( P_{[b_1, \dots, \widehat{b_i}, \dots, b_{n-1}]^\perp} A \bigr).
%    \]

In the case $n=3$, \eqref{eq:partial_polymatroid} recovers precisely the original log-submodularity property.

The contributions of this paper are the following: 

\begin{enumerate}[(i)] 
    \item Let us abbreviate  the diagram $\VP(\Z_m^n,\underbrace{\Z_1^n,\dots, \Z_1^n}_{k})$ by $\cV(n,m,k)$.
    We establish a duality between the projection diagrams $\cV(n,m,k)$ and $\cV(m+k-n,m,k)$. This duality principle ultimately follows from the duality of the Grassmannians, a classical topic in algebraic geometry. See Section \ref{sec:duality}. 	 
    \item We show that the log-submodularity conjecture is true for $n$-dimensional zonotopes $A$ with $m$ generators if and only if $m \le n+1$. The positive answer to the conjecture in the case of $m=n+1$ is a direct consequence of a general duality principle of diagrams of zonotopes (see Corollary \ref{cor:d+1}). This result and (i) was also established independently in \cite{FHMNZ}.
    \item We prove a ``Helly-type'' result that the conjecture about $f(I) = \log \vol P_I(A)$ being a partial polymatroid (see \eqref{eq:partial_polymatroid}) can be reduced from the case of an arbitrary $m \ge n$ to the case $m=2^{n-1}$. See Section~\ref{sec:HIvsPI}. 
The inequality \eqref{eq:partial_polymatroid}  admits a natural interpretation as a relation between points on the absolute Grassmannian $\operatorname{Abs}(\operatorname{Gr}(n,m+n-1))$. The reduction therefore allows one to reduce the verification of this relation on 
 the fixed, independent of $m$, absolute Grassmannian $\operatorname{Abs}(\operatorname{Gr}(n,2^{n-1}+n-1))$, see Remark~\ref{rem:grassmannian}.

	\item 
    We show that the conjecture \eqref{eq:partial_polymatroid} is equivalent to the following purely combinatorial statement for $d = n-1$, which we call the \emph{Hypercube inequality}:
\begin{equation}\label{eq:hypercube_ineq}
	\left( 
        \sum_{S \in \binom{\{0,1\}^d}{d+1}} 
            \Vol(S)\prod_{v \in S} x_v
            \right)
        \left( 
        \sum_{v \in \{0,1\}^d} x_v 
    \right)^{d-1}  
    \le 
    \prod_{\substack{F\subset [0,1]^d\\ F\, \text{facet}}} 
    \sum_{v \in F \cap \{0,1\}^d} 
        x_v,
\end{equation}
where we assign a non‑negative variable $x_v \in \mathbb{R}_+$ to each vertex $v$ of
the hypercube $[0,1]^d$ and $\Vol(S)=d!\vol(\conv(S))$ denotes the normalized $d$-dimensional volume of the simplex with vertex set $S$. The product in the right-hand side runs over all $2d$ facets of the cube: each factor corresponds to the sum of $x_v$ over the vertices of that facet. See Section~\ref{sec:HIvsPI}. 
%\KSays{The vertices of the $d$-cube correspond naturally to Plücker coordinates on $\operatorname{Abs}(\operatorname{Gr}(n,2^{n-1}+n-1))$, and the coefficients $\Vol(S)$ appearing in the hypercube inequality are precisely the absolute values of those coordinates. Thus, the hypercube inequality becomes a statement about these Plücker coordinates, and our geometric inequalities for zonoids translate into purely combinatorial statements of independent interest in matroid theory, $M$-convex sets~\cite{Mur}, and the theory of strongly log‑concave polynomials~\cite{BH}. }

    \item We show that the Hypercube inequality \eqref{eq:hypercube_ineq} is true for $n=4$ and provide a sum-of-squares certificate (and a non-negative part), thereby linking it to real algebraic geometry and the theory of non-negative polynomials~\cite{Re,Ma}. We also describe its equality cases and show that it is tight. See Section \ref{sec:proof_HI}. 
\end{enumerate} 

Observe that it is practically impossible to write the Hypercube inequality \eqref{eq:hypercube_ineq} explicitly: in the left-hand side we would need a complete description of all simplices spanned by vertices of the hypercube together with their volumes, which, in general, is unknown. %Moreover, we would also need to know the volumes of such simplices, which in higher dimension is also unknown. 
Even the problem of finding the maximum-volume simplex in a hypercube is itself a classical open question, closely related to the Hadamard maximum determinant problem \cite{HKL, NWZ}. Note that in the next open dimension, $n=4$, the expansion of \eqref{eq:hypercube_ineq} consists of roughly $400000$ terms.

This work connects several mathematical fields. First, we rely on classical convex geometry: Minkowski sums, mixed volumes, and volume polynomials of zonoids~\cite{Sch}. Second, the hypercube inequality \eqref{eq:hypercube_ineq} is a purely combinatorial statement. Its left‑hand side involves sums over vertices and simplices of the cube, and its right‑hand side runs over facets; this structure naturally links to matroid theory~\cite{Ox} and discrete convex analysis~\cite{Mur}. Indeed, the inequality resembles log‑concavity properties of matroid basis generating functions~\cite{AH}, and the vertices of a cube are the characteristic vectors of subsets. 

This work also raises two general questions. First, does a Helly-type property hold for diagrams involving zonoids? In other words, can a 
diagram for zonoids described above be realized as a diagram for zonotopes with at most $m$ generators for some $m$ depending on~$n$? The other question is a complete inequality description of the diagrams for zonoids and zonotopes. The smallest open case here is a description of 
$\VP(\Z^3,\Z^3,\Z^3)$ and $\VP(\Z^3,\{[0,e_1]\},\{[0,e_2]\},\{[0,e_3]\})$.

\subsection*{Acknowledgments}  Ivan Soprunov is supported by the AMS-Simons Travel Grant. Katherina von Dichter is supported by the Postdoc Network Brandenburg (Germany) through the project \emph{Geometrische Ungleichungen für gemischte Volumina} (English: Geometric Inequalities for Mixed Volumes).

\section{Definitions and notations}
\label{sec:def}
Recall that a subset of $\mathbb{R}^n$ is called a \emph{convex body} if it is compact, convex, and non-empty.  For a convex body $K\subset\R^n$ we use $\vol(K)$ to denote its Euclidean $n$-dimensional volume.

The \emph{Hausdorff metric} of convex bodies $K$ and $L$ is the least $\rho \geq 0$ such that every point of $K$ is at distance at most $\rho$ from some point of $L$, and vice versa, every point of $L$ is at distance at most $\rho$ from some point of $K$. The \emph{Minkowski sum} of $K, L \subset \mathbb{R}^n$ is defined by $K + L = \{\, p + q : p \in K, \, q \in L \,\}$, and the non-negative scaling of $K \subset \mathbb{R}^n$ by $\lambda \in \mathbb{R}_+$ is defined by $\lambda K = \{\, \lambda p : p \in K \,\}$. For any $X\subset\R^n$ let $\conv(X)$ denote the \emph{convex hull} of $X$.  A \emph{zonotope} is a convex polytope which is the Minkowski sum of finitely many segments, %i.e., linear projections of cubes 
and a \emph{zonoid} is defined as the limit of zonotopes in the Hausdorff metric. We denote the family of all zonoids in $\mathbb{R}^n$ by $\Z^n$ and the family of zonotopes in $\mathbb{R}^n$ generated by at most $m$ segments by $\Z^n_m$.

For convex bodies $K_1,\dots,K_n\subset\mathbb{R}^n$, Minkowski's theorem %fundamental theorem of mixed volumes 
guarantees the existence of a unique symmetric, multilinear functional $V(K_1,\dots,K_n)$ (with respect to Minkowski addition) that satisfies $V(K,\dots,K)=\operatorname{vol}(K)$ for any convex body $K$ (see~\cite[Section~5.1]{Sch} for details). This functional, called the \emph{mixed volume}, can be expressed via the inclusion–exclusion formula
\[
V(K_1,\dots,K_n)=\frac{1}{n!}\sum_{p=1}^n(-1)^{n+p}\!\!
\sum_{1\le i_1<\dots<i_p\le n}
\operatorname{vol}\!\left(K_{i_1}+\cdots+K_{i_p}\right). 
\]

A direct consequence of Minkowski's theorem is that for any convex bodies $K_1,\dots,K_\ell\subset\mathbb{R}^n$ and non‑negative real numbers $x_1,\dots,x_\ell$, the volume of the Minkowski sum $x_1K_1+\cdots+x_\ell K_\ell$ is a homogeneous polynomial of degree $n$ in the $x_i$. This polynomial, known as the \emph{volume polynomial} of $K=(K_1,\dots,K_\ell)$, satisfies
\begin{align*}
f_K(x) &:=\operatorname{vol}\bigl(x_1K_1+\cdots+x_\ell K_\ell\bigr)\\
&= V\bigl(x_1K_1+\cdots+x_\ell K_\ell,\;\dots,\;x_1K_1+\cdots+x_\ell K_\ell\bigr)\\
&= \sum_{i_1,\dots,i_n=1}^\ell \,V(K_{i_1},\dots,K_{i_n})\,x_{i_1}\cdots x_{i_n}.
\end{align*}

Throughout the paper, whenever we consider inequalities for zonoids $A, B_1, \dots, B_k \subset \mathbb{R}^n$ which are homogeneous of degree 1 in each $B_i$, $i=1,\dots,k$,
%(and unless specified otherwise), 
we may assume without loss of generality that
\begin{equation}\label{eq:settings}
A = \sum_{i=1}^{m} [0, a_i], \qquad B_i = [0, b_i] \quad i=1,\dots,k,
\end{equation}
for some $m \geq n$ and vectors
$a_1,\dots, a_m, b_1,\dots,b_k$ in $\R^n$. %being an orthonormal system in $\mathbb{R}^n$. 

Indeed, every zonoid can be approximated by a sequence of zonotopes (finite Minkowski sums of segments) in the Hausdorff metric, and mixed volumes are continuous with respect to this metric. By a standard limit argument, if the inequality holds for all such approximating zonotopes, it holds for all zonoids.
Mixed volumes are translation invariant, hence, we may assume that every generator
of every zonotope contains the origin. Finally, mixed volumes are multilinear with respect to Minkowski addition, so if an inequality is homogeneous of degree 1 in a zonotope $B_i$, 
it suffices to verify it when $B_i$ is a segment.  

%Using the homogeneity of mixed volumes, we may also assume that $\{b_1,\dots,b_k\}$ is an orthonormal system.

%The all-ones vector is denoted by $\onevec=(1,\ldots,1)$ and we write $\onevec_n$ to indicate its dimension.

In this paper we often identify a zonotope $A = \sum_{i=1}^{m} [0, a_i]$ with the matrix
$A\in\R^{n\times m}$ with columns $a_1,\dots, a_m$. Let $I\subset [m]$ have size $n$ and
$A_I$ be the corresponding square submatrix of $A$. 
%We use $\Det(A_I) := |\det(A_I)|$ to denote the absolute value of the determinant. 
It is well known, and easy to verify, that for a zonotope $A$ in $\R^n$ one has
\begin{align} \label{vol:zon} 
\vol(A)  = \sum_{I \in \binom{[m]}{n}} |\det(A_I)|. %\Det(A_I) .
\end{align}

%We write $(a_1,\ldots,a_m)$ for the matrix
%\[
%A =
%\begin{pmatrix}
%| &  & | \\
%a_1 & \cdots & a_m \\
%| &  & |
%\end{pmatrix},
%\]
%whenever it is convenient to view $A$ as a system of its columns and define the zonotope generated by the columns of $A$ by 
%\[
%	Z(A) := Z(a_1,\ldots,a_m) = A [0,1]^n = [0,1] a_1 + \cdots + [0,1] a_m. 
%\]
%
%For a square matrix $A = (a_1,\ldots,a_n) \in \mathbb{R}^{n \times n}$, we write $\Det(A) := |\det(A)|$ for its absolute determinant. It is well known, and easy to verify, that for $A = (a_1,\ldots,a_m) \in \mathbb{R}^{n \times m}$,
%\begin{align} \label{vol:zon} 
%\vol(Z(A)) := \vol(Z(a_1,\ldots,a_m)) = \sum_{I \in \binom{[m]}{n}} \Det\big( (a_i)_{i \in I} \big).
%\end{align}

%Applying \eqref{vol:zon} to the vectors $x_i a_i$ with $x_i \in \mathbb{R}_+$, we obtain
%\begin{align*}
%\vol(Z(x_1 a_1,\ldots,x_m a_m)) 
%= \sum_{I \in \binom{[m]}{n}} \left( \prod_{i \in I} x_i \right) \Det\big( (a_i)_{i \in I} \big), 
%\end{align*}
%which is a volume polynomial for the system of the segments $[0,a_1],\ldots,[0,a_m]$ in $\R^n$. 

Let $A = \sum_{i=1}^{m} [0, a_i]\subset\R^n$ be a zonotope with $m$ generators and 
$B_i=[0,b_i]$ for $i=1,\dots, k$ be segments in $\R^n$. They determine the volume polynomial
$$f_{A,B}(x)=\vol(A+x_1B_1+\dots+x_kB_k).$$ 
This is a (nonhomogeneous) multi-affine polynomial
in $x_1,\dots, x_k$.  Generically, the coefficients of $f_{A,B}(x)$ are the volumes of projections of $A$ orthogonal to possible subspaces defined by $B_1,\dots, B_k$. Indeed, assume first that $\{ b_1,\dots, b_k \}$ %$(b_1,\dots, b_k)$ 
is an orthonormal system. Then, by the multilinearity of the mixed volume, we have
\begin{align}\label{e:vol-poly}
 f_{A,B}(x)= \sum_{p=0}^k\sum_{|J|=p}\frac{n!}{(n-p)!}V(A[n-p],B_J)\,x_{J}=\sum_{p=0}^k\sum_{|J|=p}\vol(P_{[B_{J}]^\perp}A)\,x_{J},
\end{align}
where for $J=\{i_1<\dots<i_p\}\subset[k]$ we use
$P_{[B_{J}]^\perp}$ to denote the projection operator onto the subspace orthogonal to the span
of $b_{i_1},\dots,b_{i_p}$,  and $x_{J}=x_{i_1}\cdots x_{i_p}$ is the corresponding monomial. 
When $b_1,\dots, b_k$ are linearly independent but not necessarily orthonormal, one can apply a linear transformation to make them orthonormal. In this case $f_{A,B}$ is rescaled by a positive constant. Finally, 
when $b_1,\dots, b_k$ are linearly dependent $f_{A,B}$ reduces to a volume polynomial with fewer variables
after a linear change of variables.

Note that for every $J\subset[k]$ the projection $P_{[B_{J}]^\perp}A$ is a zonotope and, hence, one can 
use (\ref{vol:zon}) to compute the coefficients of $f_{A,B}(x)$ using absolute determinants.

Define the {\it projection diagram}
$\cV(k,n,m)$ as the set of all volume polynomials
$$\cV(k,n,m) = \{\vol(A+x_1B_1+\dots+x_kB_k) \ : \ A\in\Z_m^n, B_i\in\Z_1^n\}.$$
Note that $\cV(k,n,m)$ is a nonhomogeneous version of the diagram $\VP(\Z_m^n,\underbrace{\Z_1^n,\dots, \Z_1^n}_{k})$  
defined in the introduction.

\section{Duality for projection diagrams of zonotopes}\label{sec:duality}

%\Katja{finish the part about diagrams (To do: Vanya). }

%\Katja{adapt the name of mv diagrams (To do: Vanya). }

In this section we establish a duality between the projection diagrams $\cV(n,m,k)$ and $\cV(m+k-n,m,k)$. It is based on the standard duality between the Grassmannians $\Gr(n,N)$ and $\Gr(N-n,N)$, which we recall next. See also \cite[Lecture 6]{Ha}.

Let $V$ be an $N$-dimensional vector space and $V^*$ its dual. With every $n$-plane $L\subset V$ one can associate its annihilator 
$L^\perp=\{u\in V^* : u(v)=0, \forall v\in L\}$ which is a $(N-n)$-plane in $V^*$. This defines a map $\varphi:\Gr(n,N)\longrightarrow\Gr(N-n,N)$, $L\mapsto L^\perp$. 
%Under the Plücker embedding, this map is induced by the natural isomorphism $\bigwedge^d V\to\left(\bigwedge^{n-d}V\right)^*$ which comes from the
%perfect pairing $\bigwedge^d V\times \bigwedge^{n-d}V\to\bigwedge^n V$, $(\alpha,\beta)\mapsto\alpha\wedge\beta=c\omega$ where $\omega$ is a choice of a volume formthat gives an identification $\bigwedge^n V\cong\R$. Now, 
Under the Plücker embedding this map is described as follows. Let $\{v_1,\dots, v_n\}$ be a basis for $L$
 and $p(L)=[v_1\wedge\dots\wedge v_n]\in\Pro(\bigwedge^n V)$ be the corresponding Plücker point. Extend $\{v_1,\dots, v_n\}$ to a basis $\{v_1,\dots, v_n,v_{n+1},\dots, v_N\}$ for $V$ and let $\{u_1,\dots, u_n,u_{n+1},\dots, u_N\}$ be the dual basis for $V^*$. Then
 $\{u_{n+1},\dots,u_N\}$ is a basis for $L^\perp$ and, hence,
 $\varphi$ sends $p(L)$ to $p(L^\perp)=[u_{n+1}\wedge\dots\wedge u_N]\in \Pro(\bigwedge^{N-n} V^*)$. Note that the resulting projective point is independent of the choice of the extended basis.

 Now, let $V=\R^N$ with the standard basis $e_1,\dots, e_N$. The standard dot product on $V$ provides the identification $V^*=\R^N$, with $e_1,\dots, e_N$ being the dual basis. Also, $L^\perp$ is identified with the orthogonal complement of $L$. The Plücker coordinates $p(L)$ are the maximal minors $p_I(L)$ of the matrix whose rows span $L$:
$$
v_1\wedge\cdots\wedge v_n=\sum_{|I|=n} p_I(L) e_I,
$$
where $e_I=e_{i_1}\wedge\dots\wedge e_{i_n}\in\bigwedge^n V$
for $I=\{i_1<\cdots<i_n\}\subset[N]$. 

Note that the image of $e_I$ under $\varphi$ is $\pm e_{I^c}$, where $I^c=[N]\setminus I$ is the complement of $I$ and the sign is determined by the permutation $(I,I^c)$. Therefore,
$$
\varphi(v_1\wedge\dots\wedge v_n)=\sum_{|I|=n} \pm p_I(L) e_{I^c},
$$
which represents $p(L^\perp)$ in the standard basis of $\bigwedge^{N-n} V$.
This shows that for every $I\subset[N]$ of size $n$ we have
\begin{equation}\label{e:Grass-dual}
p_{I^c}(L^\perp)=\pm p_I(L),   
\end{equation}
up to a scalar independent of $I$.

\begin{thm}
The map $f(x_1,\dots, x_k)\mapsto x_1\cdots x_k f(x_1^{-1},\dots, x_k^{-1})$ defines a bijection between the
mixed volume diagrams $\cV(n,m,k)$ and $\cV(m+k-n,m,k)$.
\end{thm}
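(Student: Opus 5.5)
We encode the whole configuration as a single matrix and apply Grassmannian duality \eqref{e:Grass-dual}. Here $\cV(n,m,k)$ denotes the diagram of volume polynomials $\vol(A+x_1B_1+\dots+x_kB_k)$ with $A\in\Z^n_m$ and $B_i\in\Z^n_1$, i.e. ambient dimension $n$, $m$ generators of $A$, and $k$ segments.

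Set $N=m+k$. Given $A=\sum_{i=1}^m[0,a_i]$ and $B_j=[0,b_j]$ as in \eqref{eq:settings}, form the $n\times N$ matrix $M=[a_1\cdots a_m\,|\,b_1\cdots b_k]$. The body $A+x_1B_1+\dots+x_kB_k$ is the zonotope with generators $a_1,\dots,a_m,x_1b_1,\dots,x_kb_k$. Applying \eqref{vol:zon} to it gives
$$f(x)=\sum_{I\in\binom{[N]}{n}} |\det M_I|\prod_{j\in I\cap\{m+1,\dots,N\}} x_{j-m}.$$
Thus $f$ is multi-affine, and its coefficients are the absolute Plücker coordinates of the row space of $M$, sorted by which $b$-columns are used. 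Call this the basic formula.

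\emph{Main case: $M$ has rank $n$.} Let $L\subset\R^N$ be the row space of $M$, so $L\in\Gr(n,N)$. Choose an $(N-n)\times N$ matrix $M'$ whose rows span $L^\perp$, and split its columns as $a'_1,\dots,a'_m,b'_1,\dots,b'_k\in\R^{N-n}=\R^{m+k-n}$. By \eqref{e:Grass-dual} there is a constant $c>0$ with $|\det M'_{I^c}|=c\,|\det M_I|$ for all $I$. Replacing one row of $M'$ by its multiple with $1/c$ lets us take $c=1$.

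Let $A'=\sum[0,a'_i]\in\Z^{m+k-n}_m$ and $B'_j=[0,b'_j]$, and apply the basic formula to $M'$. Reindexing the sum by $J=I^c$, a $b$-column $j$ lies in $J$ exactly when it does not lie in $I$. Hence
$$f_{A',B'}(x)=\sum_I|\det M_I|\prod_{j\notin I}x_j=x_1\cdots x_k\,f(x_1^{-1},\dots,x_k^{-1}).$$
So the map sends $\cV(n,m,k)$ into $\cV(m+k-n,m,k)$.

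\emph{Degenerate case: $\operatorname{rank}M<n$.} This includes the case $N<n$. Then $f\equiv0$, and $0$ lies in the target diagram: take all vectors zero there. If $m+k-n\le 0$, both diagrams reduce to trivial or constant data and are checked directly.

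\emph{Bijectivity.} The map $f\mapsto x_1\cdots x_k f(1/x)$ is an involution on multi-affine polynomials in $x_1,\dots,x_k$, so it is injective. It is surjective because the same argument with $n$ replaced by $m+k-n$ maps $\cV(m+k-n,m,k)$ into $\cV(n,m,k)$, since $m+k-(m+k-n)=n$. Applying the involution to that inclusion gives surjectivity.

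The main obstacle is bookkeeping rather than an idea. Three points need care: the signs and the global scalar in \eqref{e:Grass-dual} are harmless only because we take absolute values and rescale a row; the complement $I\mapsto I^c$ must correctly swap "$b_j$ used" with "$b_j$ unused"; and the rank-deficient and small-dimension cases must be checked so that the zero polynomial is handled on both sides.
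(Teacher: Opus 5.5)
Your proposal is correct and is essentially the paper's argument: put the generators of $A$ and the segments $B_j$ as the columns of one $n\times(m+k)$ matrix, and pass to a matrix whose rows span the orthogonal complement of its row space. Then use \eqref{e:Grass-dual} to identify $|\det M'_{I^c}|$ with $|\det M_I|$ up to a common factor, and note that $I\mapsto I^c$ exchanges used and unused $b$-columns.

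Your version is slightly tidier than the paper's. Reading the coefficients directly off \eqref{vol:zon} avoids the reduction to orthonormal $b_j$, rescaling a row of $M'$ removes the ``up to a positive scalar,'' and the involution argument makes bijectivity explicit. The only loose end is your ``checked directly'' for $m+k\le n$, but that degenerate range lies outside the paper's setting $m\ge n$, $k\ge 1$.
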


\begin{proof} 
Let $f_{A,B}(x)=\vol_d(A+x_1B_1+\dots+x_kB_k)$ be the volume polynomial of $A\in\Z_m^n$ and $B_1,\dots, B_k\in\Z_1^n$. Without loss of generality, we may assume that $B_1,\dots, B_k$ are pairwise orthogonal unit segments.

Form a matrix $M\in\R^{n\times (m+k)}$ whose columns are the generators of $A$ and $B_1,\dots, B_k$ and let $L$ be the row span of $M$. Let $M^\perp\in\R^{(m+k-n)\times(m+k)}$ be a matrix whose row span is the orthogonal complement $L^\perp$ and let $A^*\in\Z_m^{m+k-n}$ be the zonotope generated by the first $m$ columns of $M^\perp$ and $B^*_1,\dots, B^*_k\in\Z_1^{m+k-n}$ be segments generated by the last $k$ columns of $M^\perp$. Recall that by (\ref{e:vol-poly})
$$
f_{A,B}(x)=\sum_{p=0}^k\sum_{|J|=p}\vol(P_{[B_{J}]^\perp}A)\,x_{J}.
$$
Note that the volume of the projection $P_{[B_{J}]^\perp}A$ equals the sum of the absolute values of the maximal minors that use exactly those of the last $k$ columns of $M$ that correspond to the subset $J\subset[k]$. Thus, we can write
$$
\vol(P_{[B_{J}]^\perp}A) = \sum_{S\subset I\subset T}|p_I(L)|,
$$ 
where the sum is over subsets $I\subset[m+k]$ of size $n$, $S=\{m+i : i\in J\}$, and $T=[m]\cup S$.

Applying the duality (\ref{e:Grass-dual}), we get (up to some positive scalar)
$$
\sum_{S\subset I\subset T}|p_I(L)| = \sum_{T^c\subset I^c\subset S^c}|p_{I^c}(L^\perp)|,
$$
where the complements are taken with respect to the set $[m+k]$. Note that $T^c=\{m+i : i\in [k]\setminus J\}$ and $S^c=[m]\cup T^c$.
Therefore, 
$$
\sum_{T^c\subset I^c\subset S^c}|p_{I^c}(L^\perp)|=\vol(P_{[B^*_{[k]\setminus J}]^\perp}A^*).
$$
Comparing this with the formula for the volume polynomial (\ref{e:vol-poly}) and noting $x_J=x_1\cdots x_k(x_{[k]\setminus J})^{-1}$ we obtain (up to a positive scalar)
$$f_{A,B}(x_1,\dots, x_k)=x_1\cdots x_k f_{A^*,B^*}(x_1^{-1},\dots, x_k^{-1}),$$
which completes the proof.
\end{proof}

The above theorem implies that any coefficient inequality that holds for elements of $\cV(n,m,k)$ can be transformed to the corresponding coefficient inequality for elements of $\cV(m+k-n,m,k)$. We apply this observation in the case of local Loomis--Whitney-type inequality.

\begin{cor}\label{C:dual-inequalities}
Let $k,n,m\in\N$ with $k\leq n\leq m$ and $c_{k,n,m}\in\R_{+}$.
Then the inequality
\begin{equation}\label{eq:projection-2}
\vol(A)^{k-1} \, \vol\bigl( P_{[b_1,\dots,b_k]^\perp} A \bigr)
\;\leq\; c_{k,n,m}
\prod_{\ell=1}^k \vol\bigl( P_{b_\ell^\perp} A \bigr),
\end{equation}
holds for any $A\in\Z_m^n$ and any orthonormal set $\{b_1,\dots,b_k\}$ in $\R^n$ if and only if the inequality 
\begin{equation}\label{eq:projection_dual-2}
\vol(A) \, \vol\bigl( P_{[b_1,\dots, b_k]^\perp} A \bigr)^{k-1}
\;\le\; c_{k,n,m}
\prod_{\ell=1}^k \vol\bigl( P_{[b_1 \dots, \hat b_\ell,\dots, b_k]^\perp} A \bigr)
\end{equation}
holds for any $A\in\Z_m^{m+k-n}$ and any orthonormal set $\{b_1,\dots,b_{k}\}$ in $\R^{m+k-n}$.
\end{cor}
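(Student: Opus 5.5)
The plan is to use the duality theorem just proved: the map $f(x)\mapsto x_1\cdots x_k f(x_1^{-1},\dots,x_k^{-1})$ is a bijection between $\cV(n,m,k)$ and $\cV(m+k-n,m,k)$, where the index convention is the one fixed in the proof (ambient dimension $n$, $m$ generators for $A$, $k$ segments). First I will reduce both inequalities to statements about coefficients of volume polynomials.

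For $A\in\Z_m^n$ and pairwise orthogonal unit segments $B_\ell=[0,b_\ell]$, formula \eqref{e:vol-poly} gives
\[
f_{A,B}(x)=\sum_{J\subset[k]} c_J x_J,\qquad c_J=\vol\bigl(P_{[B_J]^\perp}A\bigr).
\]
In this notation \eqref{eq:projection-2} reads
\[
c_\emptyset^{\,k-1}c_{[k]}\le c_{k,n,m}\prod_{\ell} c_{\{\ell\}},
\]
and \eqref{eq:projection_dual-2} reads
\[
c_\emptyset\, c_{[k]}^{\,k-1}\le c_{k,n,m}\prod_\ell c_{[k]\setminus\{\ell\}}.
\]
The duality map sends $f$ to $g$ with coefficients $d_J=c_{[k]\setminus J}$. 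Under this substitution $d_\emptyset=c_{[k]}$, $d_{[k]}=c_\emptyset$, and $d_{\{\ell\}}=c_{[k]\setminus\{\ell\}}$. Hence the first coefficient inequality for $f$ becomes exactly the second one for $g$, and conversely.

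The theorem only identifies polynomials up to a positive scalar. Both inequalities are homogeneous of degree $k$ on each side in the coefficients, so a common positive factor cancels, and the constant $c_{k,n,m}$ is preserved.

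It remains to check that the orthonormality hypotheses match on both sides. I will argue as follows.
\begin{enumerate}[(1)]
\item Each of \eqref{eq:projection-2} and \eqref{eq:projection_dual-2}, quantified over orthonormal $\{b_\ell\}$, is equivalent to the corresponding coefficient inequality holding for all of $\cV(\cdot,m,k)$ in which the $b_\ell$ are linearly independent.
\item Linearly independent $b_\ell$ can be made orthonormal by a linear map applied to both $A$ and the $b_\ell$. This rescales $f$ by a positive constant, by the remark after \eqref{e:vol-poly}, and is harmless by homogeneity.
\item In the dual construction, $B_1^*,\dots,B_k^*$ are the last $k$ columns of $M^\perp$. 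Their independence is equivalent to $c_\emptyset=\vol(A)>0$ in the original, i.e.\ to the columns of $A$ spanning $\R^n$.
\end{enumerate}

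The main obstacle is the degenerate cases, where $\vol(A)=0$ or where some $b$'s become dependent after dualizing. I would handle them by approximation: perturb $A$ slightly so that it is full-dimensional and all relevant minors are generic, apply the argument, and pass to the limit using continuity of the coefficients. The same device lets me assume that $M$ has full rank $n$ and that $M^\perp$ has the required shape. Finally, the bijection runs in both directions, since applying the duality twice returns $n$. Applying the argument to $\cV(m+k-n,m,k)$ therefore gives the converse implication with the same constant.
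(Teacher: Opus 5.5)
Your proposal is correct and takes the same route as the paper. The paper derives the corollary directly from the duality theorem: both inequalities are rewritten as coefficient inequalities, and the correspondence $c_J \leftrightarrow c_{[k]\setminus J}$ turns one into the other. Your extra steps fill in details the paper leaves implicit: homogeneity absorbs the positive scalar, linear normalization handles the possibly non-orthonormal dual segments $B_j^*$ (which are independent exactly when $\vol(A)>0$), and perturbation handles the degenerate cases.
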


The particular case $m=n+1$ and $k=2$ provides a proof of the log-submodularity property for zonotopes with $n+1$ generators.

\begin{cor}\label{cor:d+1}
    Let $A\subset\R^n$ be a zonotope with $n+1$ generators. Then for any zonotopes $B_1,B_2\subset\R^n$ the following inequality holds
    \begin{equation}\label{e:d+1}
    \vol(A)V(A[n-2],B_1,B_2)\leq V(A[n-1],B_1)V(A[n-1],B_2).   
    \end{equation}
\end{cor}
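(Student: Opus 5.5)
The plan is to derive \eqref{e:d+1} from Corollary~\ref{C:dual-inequalities} with $k=2$, $m=n+1$ and $c_{2,n,n+1}=1$. In that case $m+k-n=3$, so the dual inequality \eqref{eq:projection_dual-2} is a statement about zonotopes in $\R^3$ with $n+1$ generators and orthonormal $b_1,b_2$.

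The first step is to reduce \eqref{e:d+1} to the projection form \eqref{eq:projection-2}. Both sides of \eqref{e:d+1} are homogeneous of degree one in $B_1$ and in $B_2$, so by multilinearity and the reduction \eqref{eq:settings} it suffices to treat segments $B_i=[0,b_i]$. If $b_1,b_2$ are linearly dependent, the left-hand side vanishes and there is nothing to prove. Otherwise I apply a linear map making $b_1,b_2$ orthonormal. This rescales every mixed volume by the same determinant factor, and both sides are products of two mixed volumes, so the inequality is unchanged.

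For unit orthonormal $b_1,b_2$, formula \eqref{e:vol-poly} gives
\[
nV(A[n-1],B_i)=\vol(P_{b_i^\perp}A),\qquad n(n-1)V(A[n-2],B_1,B_2)=\vol(P_{[b_1,b_2]^\perp}A).
\]
Hence \eqref{e:d+1} is equivalent to \eqref{eq:projection-2} with $k=2$ and constant $c=n/(n-1)$. Note that this is \emph{not} $c=1$. It is weaker than the $c=1$ inequality, so it suffices to prove the $c=1$ version; I would track this constant carefully.

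By Corollary~\ref{C:dual-inequalities}, the $c=1$ inequality holds for all $A\in\Z^n_{n+1}$ if and only if
\[
\vol(A)\,\vol(P_{[b_1,b_2]^\perp}A)\le \vol(P_{b_2^\perp}A)\,\vol(P_{b_1^\perp}A)
\]
holds for all $A\in\Z^3_{n+1}$ and orthonormal $b_1,b_2\in\R^3$. Rotate so that $b_1=e_1$ and $b_2=e_2$. Then the right-hand side involves the areas of the projections onto the coordinate planes $\{2,3\}$ and $\{1,3\}$, and $\vol(P_{[b_1,b_2]^\perp}A)$ is the length of the projection onto the $e_3$-axis. This is exactly the three-dimensional log-submodularity inequality $f([3])+f(\{3\})\le f(\{1,3\})+f(\{2,3\})$, which was proved for all zonoids in $\R^3$ in \cite{FMMZ} (and \cite{AS}).

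The only step needing care is this normalization: matching the mixed volumes in \eqref{e:d+1} with projection volumes, including the factorial constants. The rest is a direct application of the duality.
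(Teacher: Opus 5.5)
Your route (reduce to segments, orthonormalize $b_1,b_2$, apply Corollary~\ref{C:dual-inequalities} with $k=2$, $m=n+1$, $m+k-n=3$, then cite the three-dimensional case) is exactly the paper's. However, the step you flagged as delicate is wrong: you inverted the ratio. From $nV(A[n-1],B_i)=\vol(P_{b_i^\perp}A)$ and $n(n-1)V(A[n-2],B_1,B_2)=\vol(P_{[b_1,b_2]^\perp}A)$, inequality \eqref{e:d+1} becomes
\[
\vol(A)\,\vol(P_{[b_1,b_2]^\perp}A)\le \frac{n(n-1)}{n^2}\,\vol(P_{b_1^\perp}A)\,\vol(P_{b_2^\perp}A).
\]
This is \eqref{eq:projection-2} with $c=\frac{n-1}{n}$, not $\frac{n}{n-1}$. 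It is therefore \emph{stronger} than the $c=1$ inequality, so proving the $c=1$ version does not suffice.

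No argument can close this gap, because \eqref{e:d+1} as printed is false. Take $n=3$, $A=[0,1]^2\times[0,2]$ (generators $e_1,e_2,e_3,e_3$) and $B_i=[0,e_i]$. Then $\vol(A)=2$, $V(A,B_1,B_2)=\frac13$ and $V(A,A,B_i)=\frac23$, so the left side is $\frac23$ and the right side is $\frac49$. The box $[0,1]^{n-1}\times[0,2]$ fails in the same way for every $n\ge 3$: the left side is $\frac{4}{n(n-1)}$ and the right side is $\frac{4}{n^2}$.

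The paper's proof makes the same slip when it says \eqref{e:d+1} ``becomes'' the constant-one projection inequality. What your argument and the paper's actually prove is
\[
\vol(A)\,\vol(P_{[b_1,b_2]^\perp}A)\le\vol(P_{b_1^\perp}A)\,\vol(P_{b_2^\perp}A),
\]
equivalently \eqref{e:d+1} with a factor $\frac{n}{n-1}$ on the right-hand side. For that corrected statement your proof is complete; tracking the constants was the right instinct, but the ratio came out upside down.
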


\begin{proof}
By multilinearity of the mixed volume, it is enough to prove (\ref{e:d+1}) when $B_1,B_2$ are orthogonal unit segments, $B_i=[0,b_i]$, $i=1,2$. In this case (\ref{e:d+1}) becomes
\begin{equation*}
 \vol(A)\,\vol\bigl( P_{[b_1,b_2]^\perp} A \bigr)
\;\leq\; 
\vol\bigl( P_{b_1^\perp} A \bigr)\,\vol\bigl( P_{b_2^\perp} A \bigr). 
\end{equation*}

By Corollary~\ref{C:dual-inequalities}, the above inequality holds if and only if it holds for any $A\in\Z_m^3$ and orthonormal $\{b_1,b_2\}$ in $\R^3$, which was already established in~\cite{FMMZ}.
\end{proof}

\section{Reduction to the Hypercube Inequality}\label{sec:HIvsPI}

%\Katja{we need to change the name of the sec.}

%In this section we show that the validity of the Hypercube inequality \eqref{eq:hypercube_ineq} would imply the validity of the Projection Inequality \eqref{eq:projection}. 

In this section we focus on our conjectured inequality \eqref{eq:partial_polymatroid}, that is,  the conjecture that $f(I) = \log \vol P_I(A)$ is a partial polymatroid on the the family of index sets 
    \[
    \{\{n\},\{1,n\},\{2,n\},\ldots, \{n-1,n\},\{1,2,\ldots,n\}\}.
    \]
%about , which is a particular case of the dual local Loomis-Whitnes-type inequality  (\ref{eq:projection_dual-2}) when $k=n-1$ and $c_{k,n,m}=1$. 
%Without loss of generality, we assume that $\{b_1,\dots, b_n\}$ is the standard basis for $\R^n$ and, hence, the projections involved are coordinate projections. 
We show that inequality \eqref{eq:partial_polymatroid} can be reduced to the Hypercube Inequality \eqref{eq:hypercube_ineq}. 

First, \eqref{eq:partial_polymatroid} is equivalent to
\begin{equation}\label{eq:projection_dual-3}
\vol(A) \, \vol\bigl( P_{[e_1,\dots, e_{n-1}]^\perp} A \bigr)^{n-2}
\;\le\; 
\prod_{\ell=1}^{n-1} \vol\bigl( P_{[e_1 \dots, \hat e_\ell,\dots, e_{n-1}]^\perp} A \bigr).
\end{equation}
Let $A\in\R^{n\times m}$ be the matrix corresponding to the zonotope $A$ and let 
$A_{S,\ast}$ denote the submatrix of $A$ consisting of rows of $A$ indexed by
$S\subset[n]$. Then $A_{\{n\},\ast}$ and $A_{\{\ell,n\},\ast}$ are the matrices corresponding to the zonotopes $P_{[e_1,\dots, e_{n-1}]^\perp} A$ and $P_{[e_1 \dots, \hat e_\ell,\dots, e_{n-1}]^\perp}A$, respectively. Thus, (\ref{eq:projection_dual-3}) becomes
\begin{equation}\label{eq:proj_matrix} 
	\vol(A) \vol(A_{\{n\},\ast})^{n-2} \le \prod_{\ell=1}^{n-1}\vol(A_{\{\ell,n\},\ast}).  
\end{equation}

%Using the coordinate projection $(y_i)_{i \in [d]} \mapsto (y_i)_{i \in S}$ with $S \subseteq [n]$, to map the segments $[0,a_1],\ldots,[0,a_m]$ to dimension $|S|$, gives us a lower dimensional zonotope $Z(A_{S,\ast})$. Thus, the Projection inequality \eqref{eq:projection} states that for all $A \in \R^{m \times n}$ holds
%\begin{equation}\label{eq:proj_matrix} 
%	\vol(Z(A)) \cdot \vol(Z(A_{1,\ast}))^{n-2} \le \prod_{k=2}^n \vol(Z(A_{\{1,k\},\ast} )).  
%\end{equation} 
%
%If this conjecture is true, then it must be valid for all matrices $A \in \mathbb{R}^{m \times n}$ and for arbitrary dimensions $m,n \in \mathbb{N}$. Our approach is to reduce it to certain special cases. The inequality is expressed in terms of the absolute values of minors of $A$ and therefore has the structural properties of the determinant, which will play a key role in the analysis.

\subsection{First reduction of matrix $A \in \mathbb{R}^{m \times n}$}
In this step we show that the last row of $A$ can be replaced, without loss of generality, by $\onevec_m$. 

We first observe that we may assume the last row of $A$ has non-negative entries, since replacing any column $a_i$ by $-a_i$ does not affect the volumes of the zonotopes involved and allows us to eliminate negative signs in the first row.

After this, we may further assume that the last row consists of strictly positive entries. Indeed, the remaining cases can be recovered by the limit argument, using the continuity of the quantities appearing in \eqref{eq:proj_matrix}.  

Under this assumption we can express $a_i$ as $a_i = x_i \tilde a_i$ with $x_i \in \R_+$ being the last component of $a_i$ and $\tilde a_i = a_i / x_i \in \R^{n-1}\times \{1 \} $. 

After this reformulation, setting $\tilde A = (\tilde a_1,\ldots,\tilde a_m)$, inequality \eqref{eq:proj_matrix} becomes
\[
f(x,\tilde A) \, f(x,\tilde A_{\{n\},\ast})^{n-2} \le \prod_{\ell=1}^{n-1} f(x,\tilde A_{\{\ell,n\},\ast}),
\]
where for a matrix $M$ we use $f(x,M)$ to denote the volume polynomial of the segments defined by the columns of $M$.
%and let $p_S(x,\tilde A) := p(x,\tilde A_{S,\ast})$.
Since $f(x,\tilde A_{\{n\},\ast}) = \sum_{i=1}^m x_i$, we rewrite the inequality as
\begin{equation}\label{eq:proj_matrix_B}
f(x,\tilde A) \left(\sum_{i=1}^m x_i\right)^{n-2} \!\le\, \prod_{\ell=1}^{n-1} f(x,\tilde A_{\{\ell,n\},\ast}). 
\end{equation}

The advantage of this step is that it separates the dependence on the variables in \eqref{eq:proj_matrix}. 
It is now clear that the expression in \eqref{eq:proj_matrix_B} is a polynomial in the $m$ variables $x_i \in \mathbb{R}_+^m$, while the remaining dependence is on the $m(n-1)$ entries of the matrix $\tilde A$. % (corresponding to rows $2,\ldots,n$), which we need to analyze.

\subsection{Second reduction of matrix $A \in \mathbb{R}^{m \times n}$}
In this step we further reduce the matrix $A$ to the case where the last row is $\onevec_m$, and the remaining rows have entries in $\{0,1\}$.

A finite collection $\mathcal{P} = \{P_1,\dots,P_M\}$ of $m$-dimensional polyhedra whose union equals $\mathbb{R}^m$ is called a \emph{polyhedral subdivision} of $\mathbb{R}^m$ provided that for any two distinct members $P_i, P_j \in \mathcal{P}$ the intersection $P_i \cap P_j$ is either empty or a common proper face of both $P_i$ and $P_j$. A set $G \subseteq \mathbb{R}^m$ is said to \emph{generate} $\mathcal{P}$ if every $P_i$ can be expressed as the convex hull of some subset $G_i \subseteq G$.

For instance, if each $P_i$ is pointed, then the union of all vertices and all unbounded edges of the polyhedra in $\mathcal{P}$ forms a generating set for $\mathcal{P}$.

A function $g : \mathbb{R}^m \to \mathbb{R}$ is called \emph{piecewise affine} if there exists a polyhedral subdivision $\mathcal{P} = \{P_1,\dots,P_M\}$ of $\mathbb{R}^m$ such that $g$ is affine on each $P_i$, $i=1,\dots,M$. In that situation we also say that $\mathcal{P}$ is \emph{compatible} with $g$.

We call a function $f : \mathbb{R}^{n_1} \times \cdots \times \mathbb{R}^{n_k} \to \mathbb{R}$ \emph{multi-convex} if it is convex in each argument when the remaining arguments are held fixed.

We recall a key lemma from \cite{AS}, which was shown for dimension $n=3$, and can be extended straightforwardly to higher dimensions as follows.

\begin{lemma}\label{prop:convex_lemma}
Let $P_1,\ldots,P_k$ be polyhedral subdivisions of $\mathbb{R}^{n_1},\ldots,\mathbb{R}^{n_k}$ with generating sets $G_1,\ldots,G_k$, respectively. Let
\[
f:\mathbb{R}^{n_1}\times\cdots\times\mathbb{R}^{n_k} \to \mathbb{R}
\]
be a multi-convex function, and
\[
g_i:\mathbb{R}^{n_i}\to\mathbb{R}, \quad i=1,\ldots,k,
\]
be piecewise affine functions such that each subdivision $P_i$ is consistent with $g_i$.
Then the inequality
\[
f(y_1,\ldots,y_k) \le g_1(y_1)\cdots g_k(y_k)
\]
holds for all $(y_1,\ldots,y_k)\in \mathbb{R}^{n_1}\times\cdots\times\mathbb{R}^{n_k}$
if and only if it holds for all
\[
(y_1,\ldots,y_k)\in G_1\times\cdots\times G_k.
\]
\end{lemma}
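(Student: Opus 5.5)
The ``only if'' direction is trivial, since $G_1\times\cdots\times G_k$ is a subset of the domain. For the ``if'' direction I will argue one coordinate at a time. The goal is to show that if the inequality holds on $G_1\times\cdots\times G_i\times\mathbb{R}^{n_{i+1}}\times\cdots\times\mathbb{R}^{n_k}$, then it holds on $G_1\times\cdots\times G_{i-1}\times\mathbb{R}^{n_i}\times\cdots\times\mathbb{R}^{n_k}$. Starting from $i=k$ and descending to $i=1$ then gives the claim on the whole domain. So everything reduces to the case where only one variable is free.

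For the single-variable step, fix all arguments except $y_i$. The map $y\mapsto f(\dots,y,\dots)$ is convex, by the multi-convexity of $f$. The map $y\mapsto c\,g_i(y)$, with $c=\prod_{j\neq i}g_j(y_j)$ a constant, is affine on each cell $P$ of the subdivision $\mathcal P_i$. Take any $y\in\mathbb{R}^{n_i}$ and a cell $P$ containing it. By the definition of a generating set, $P=\conv(G_P)$ with $G_P\subseteq G_i$, where the convex hull is understood in the sense appropriate for unbounded polyhedra. In the bounded case, $y$ is a convex combination $\sum\lambda_s s$ of points $s\in G_P$. Convexity gives $f(\dots,y,\dots)\le\sum\lambda_s f(\dots,s,\dots)$. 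The inductive hypothesis gives $f(\dots,s,\dots)\le c\,g_i(s)$. Since $c\,g_i$ is affine on $P$, we have $\sum\lambda_s c\,g_i(s)=c\,g_i(y)$. Chaining these yields the inequality at $y$.

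One subtlety is the sign of $c$. The argument above does not need $c\ge 0$: the step ``affine combination equals the value'' holds for any constant multiple. The inequality $f\le c g_i$ at the generators is simply assumed, so the chain works for arbitrary $c$.

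The main obstacle is the unbounded cells, where ``generating set'' has to be read so that $P$ is the convex hull of points of $G$ together with the rays through the unbounded edges. Convexity alone does not control $f$ along a ray. If unbounded edges are interpreted as elements of $G$ that are points far out on the ray, then $P$ is only a limit of the convex hulls of finite subsets. To handle this, I would use the paper's convention that $\conv(G_i)=P_i$ with $G_i$ possibly infinite, for instance containing whole unbounded edges. Then every $y\in P$ is still a finite convex combination of points of $G_P$ by Carathéodory's theorem, and the argument above goes through verbatim. I expect that in the application only bounded or conic cells occur, namely cells of the subdivision induced by the piecewise affine functions $|\det|$ in each column. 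So I would state that convention explicitly and otherwise rely on the bounded-case argument.
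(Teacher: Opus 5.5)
Your proof is correct and uses the same ingredients as the paper's. You write a point of a cell as a finite convex combination of points of $G_i$; under the paper's convention that generating sets may be infinite (e.g.\ containing whole unbounded edges), this is immediate from the definition of convex hull, with no limiting argument or Carath\'eodory needed. You then apply convexity of $f$, the hypothesis, and affineness of $g_i$ on the cell, with no sign condition on the $g_i$. The only difference is bookkeeping: the paper expands all $k$ arguments at once into a multi-index sum and then factors $\sum_{t_1,\ldots,t_k}\prod_i\lambda^i_{t_i}g_i(y^i_{t_i})=\prod_i\sum_{t_i}\lambda^i_{t_i}g_i(y^i_{t_i})$, whereas you peel off one coordinate at a time using the constant $c=\prod_{j\neq i}g_j(y_j)$.
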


\begin{proof}
Since the ``only if'' direction is trivial, we assume
\[
f(y_1,\ldots,y_k) \le g_1(y_1)\cdots g_k(y_k)
\]
for every $(y_1,\ldots,y_k)\in G_1\times\cdots\times G_k$. Take arbitrary points
$y_i\in\mathbb{R}^{n_i}$, $i=1,\ldots,k$. For each $i$ choose a cell $P_i\in\mathcal{P}_i$ with $y_i\in P_i$ and write $P_i=\operatorname{conv}(G_i')$ for some finite subset $G_i'\subseteq G_i$. Then $y_i$ can be expressed as a convex combination
\[
y_i = \sum_{t_i\in T_i} \lambda_{t_i}^{i} \, y_{t_i}^{i},
\qquad
\sum_{t_i\in T_i}\lambda_{t_i}^{i} = 1,\quad \lambda_{t_i}^{i}>0,\quad y_{t_i}^{i}\in G_i', \quad  i=1,\ldots,k, 
\]
where $T_i$ are finite index sets.

By multi‑convexity of $f$ (convexity in each argument separately) we obtain
\begin{align*}
f(y_1,\ldots,y_k)
&= f\!\left(\sum_{t_1}\lambda_{t_1}^{1}y_{t_1}^{1},\;
          \sum_{t_2}\lambda_{t_2}^{2}y_{t_2}^{2},\;\ldots,\;
          \sum_{t_k}\lambda_{t_k}^{k}y_{t_k}^{k}\right) \\
&\le \sum_{t_1,\ldots,t_k} \Bigl(\prod_{i=1}^k \lambda_{t_i}^{i}\Bigr)\,
      f\!\left(y_{t_1}^{1},y_{t_2}^{2},\ldots,y_{t_k}^{k}\right).
\end{align*}
Applying the hypothesis to each tuple $(y_{t_1}^{1},\ldots,y_{t_k}^{k})\in G_1\times\cdots\times G_k$ gives
\[
f\!\left(y_{t_1}^{1},\ldots,y_{t_k}^{k}\right)
\le \prod_{i=1}^k g_i\!\left(y_{t_i}^{i}\right).
\]
Therefore,
\begin{align*}
f(y_1,\ldots,y_k)
&\le \sum_{t_1,\ldots,t_k} \Bigl(\prod_{i=1}^k \lambda_{t_i}^{i}\Bigr)\;
      \prod_{i=1}^k g_i\!\left(y_{t_i}^{i}\right) \\
&= \prod_{i=1}^k \left( \sum_{t_i\in T_i} \lambda_{t_i}^{i}\, g_i\!\left(y_{t_i}^{i}\right) \right).
\end{align*}
Since each $g_i$ is affine on the cell $P_i$ (the subdivision $\mathcal{P}_i$ is consistent with $g_i$) and 
\[
\sum_{t_i}\lambda_{t_i}^{i}y_{t_i}^{i}=y_i,
\]
we have
\[
\sum_{t_i\in T_i} \lambda_{t_i}^{i}\, g_i\!\left(y_{t_i}^{i}\right)
= g_i\!\left(\sum_{t_i\in T_i} \lambda_{t_i}^{i}y_{t_i}^{i}\right)
= g_i(y_i).
\]
Thus,
\[
f(y_1,\ldots,y_k) \le \prod_{i=1}^k g_i(y_i),
\]
which completes the proof. 
\end{proof}

We now show how  Lemma~\eqref{prop:convex_lemma} can be applied to 
\eqref{eq:proj_matrix_B}, yielding a further reduction. In what follows we abbreviate the absolute value of the determinant by $\Det$.

We view the functions in \eqref{eq:proj_matrix_B} %of the variables $x_1,\ldots,x_m$ 
as functions of the first $n-1$ rows of $\tilde A$, which we denote by $y_1,\dots, y_{n-1}$. 
%Introducing the vectors $y_k := \tilde A_{k+1,\ast} \in \mathbb{R}^m$, $k=1,\dots, n-1$, we have 
Then the left-hand side of \eqref{eq:proj_matrix_B} is
\begin{equation}\label{e:n-by-n}
f(y_1,\ldots,y_{n-1}) := f(x,\tilde A)
= \sum_{I \in \binom{[m]}{n}} \Det\big((y_1)_I,\ldots,(y_{n-1})_I,\onevec_n\big)\prod_{i \in I} x_i.
\end{equation}

Similarly, for the right-hand side of \eqref{eq:proj_matrix_B} we have
\begin{equation}\label{e:2by2}
g(y_{\ell}):=f(x,\tilde A_{\{\ell,n\},\ast})=
\sum_{\{i,j\} \in \binom{[m]}{2}}
\Det \begin{bmatrix}
(y_{\ell})_i & (y_{\ell})_j \\
1 & 1
\end{bmatrix}x_i x_j
\end{equation}
%
%Observe that the right-hand side of \eqref{eq:proj_matrix_B} is a product of factors $p_{1k}(B)$, $k \in \{2,\ldots,n\}$. Since the first row of $B$ equals $\onevec_m$ and its $k$-th row is $y_{k-1}$, the quantity $p_{1k}(B)$ is the volume polynomial of the two-dimensional zonotope generated by the columns of the matrix
%\[
%\begin{bmatrix}
%\onevec_m \\
%y_{k-1}
%\end{bmatrix}.
%\]
%Hence,
%\[
%p_{1k}(x,B)
%= \sum_{\{i,j\} \in \binom{[m]}{2}} x_i x_j
%\Det \begin{bmatrix}
%1 & 1 \\
%(y_{k-1})_i & (y_{k-1})_j
%\end{bmatrix}
%=: g(y_{k-1})
%\]
with $g : \R^m \to \R$ being a continuous piecewise linear function. 
Thus, \eqref{eq:proj_matrix_B} becomes
\begin{equation}\label{xy}
f(y_1,\ldots,y_{n-1}) \left( \sum_{i=1}^m x_i \right)^{n-2} \le\ g(y_1)\cdots g(y_{n-1}).
\end{equation}
Observe that the left-hand side of \eqref{xy} is convex in each variable $y_\ell$, while the right-hand side is a product of functions, each depending on a single $y_\ell$ and being continuous and piecewise linear. Thus, we may apply Lemma~\eqref{prop:convex_lemma}. 

Next we determine a polyhedral subdivision of $\mathbb{R}^m$ that is consistent with $g$. Each function $g(y_{\ell})$ is the sum of functions 
\[
g_{ij}(y_{\ell}) := \bigl| (y_{\ell})_i - (y_{\ell})_j \bigr| \quad \text{with} \quad 1 \le i < j \le m. 
\]
The function $g_{ij}$ is affine on both sides of the hyperplane $X_{ij} := \{x \in \mathbb{R}^m : x_i = x_j\}$ and the function $g= \sum_{i<j} g_{ij}$ is affine on each region of the hyperplane arrangement $\mathcal{X}=\{X_{ij}:1\le i<j\le m\}$. Note that $\mathcal{X}$ is the \emph{braid arrangement}: each of the $m!$ regions of $\mathcal{X}$ corresponds to a way of sorting the values $x_i$, i.e., each region is given by the system of inequalities
\[
x_{\sigma(1)} \le x_{\sigma(2)} \le \cdots \le x_{\sigma(m)},
\]
for some permutation $\sigma \in S_m$. %See Figure \ref{fig:braid3} for an illustration with $m=3$. 
We denote such a region by $R_\sigma$. The polyhedral subdivision $\mathcal{P} = \{ R_\sigma : \sigma\in S_m\}$ is consistent with $g$ and the region $R_\sigma$ is a polyhedral cone with a one‑dimensional lineality space, given by $x_1 = \cdots = x_m$, and whose two‑dimensional faces are given by the conditions
\[
x_{\sigma(1)} = \cdots = x_{\sigma(l)}
\;\le\;
x_{\sigma(l+1)} = \cdots = x_{\sigma(m)},
\quad l = 1,\dots,m-1.
\]
Since the lineality space of $R_\sigma$ is one‑dimensional, $R_\sigma$ is the convex hull of its two‑dimensional faces. Coordinates of points of two-dimensional faces take only two possible values. Therefore, by Lemma~\eqref{prop:convex_lemma}, it suffices to verify \eqref{xy} for $y_{\ell}\in \{\lambda_\ell,\mu_\ell\}^m$ for some fixed $\lambda_\ell,\mu_\ell \in \mathbb{R}$, $\ell=1, \dots, n-1.$

%\begin{figure}
%\vspace{-5cm}
%\begin{center}
%\includegraphics[width=0.7\textwidth]{braid_arrangement .pdf}
%\end{center}
%\vspace{-5cm}
%\caption{The braid arrangement in dimension $3$. The three planes given by the equations $y_1 = y_2$, $y_1 = y_3$ and $y_2 = y_3$ intersect in the line $y_1 = y_2 = y_3$. The planes decompose the space into six cells, each determined by a permutation $\sigma \in S_3$ and defined by the inequalities $y_{\sigma(1)} \le y_{\sigma(2)} \le y_{\sigma(3)}$.}
%\label{fig:braid3}
%%\vspace{-5cm}
%\end{figure}

%Thus, applying Proposition~\eqref{prop:convex_lemma} shows that, for the verification of \eqref{xy}, it suffices to consider the case $y_i \in \{\lambda_i,\lambda_i'\}^m$, where $\lambda_i,\lambda_i' \in \mathbb{R}$. 

%\Ivan{Stopped here. To be continued...}

Observe that if for some $\ell$, one has $\lambda_\ell = \mu_\ell$, then $y_\ell= \lambda_\ell \onevec_m$, and \eqref{xy} reduces to the trivial inequality $0 \le 0$. Otherwise, if $\lambda_\ell \neq \mu_\ell$ for all $\ell\in [n-1]$, we may further reduce to the case 
$\{\lambda_\ell,  \mu_\ell\}= \{0,1\}$ by the following simple observation.

\begin{lemma} \label{box:to:cube} 
Consider the map $\phi : \{0,1\}^{n-1} \to \{\lambda_\ell,  \mu_\ell\}^{n-1}$ defined by
\[
\phi(z_1,\ldots,z_{n-1}) = \bigl((1-z_\ell)\lambda_\ell + z_\ell \mu_\ell\ :\ \ell \in [n-1]\bigr).
\]
Then for any vectors $v_1,\dots, v_{n-1}\in \{0,1\}^{n-1}$ we have
\[
\Det\bigl(\phi(v_1),\ldots,\phi(v_{n-1}), \onevec_n\bigr)
= 
\Det\bigl(v_1,\ldots,v_{n-1}, \onevec_n\bigr)\prod_{\ell=1}^{n-1} |\lambda_\ell - \mu_\ell|.
\]
\end{lemma}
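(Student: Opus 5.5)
The map $\phi$ is the restriction to $\{0,1\}^{n-1}$ of an affine map $\R^{n-1}\to\R^{n-1}$. I will show that adjoining the all-ones entry turns it into a linear map of $\R^n$ whose determinant is $\prod_\ell(\mu_\ell-\lambda_\ell)$; the claimed identity is then multiplicativity of determinants followed by taking absolute values. Throughout I read each $v_i$ as a column of an $n\times n$ matrix, and append the coordinate $1$ to each column, exactly as in \eqref{e:n-by-n}: the matrix has columns $(v_i,1)\in\R^n$, or equivalently rows $y_1,\dots,y_{n-1},\onevec_n$.

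Write $D=\operatorname{diag}(\mu_1-\lambda_1,\dots,\mu_{n-1}-\lambda_{n-1})$ and $\lambda=(\lambda_1,\dots,\lambda_{n-1})^{T}$. Coordinatewise,
\[
\phi(z)=\lambda+Dz .
\]
On vectors with last coordinate $1$, this affine map is realized by the $n\times n$ linear map
\[
T=\begin{bmatrix} D & \lambda\\ 0 & 1\end{bmatrix},
\]
since $T\,(z,1)^{T}=(\lambda+Dz,\,1)^{T}=(\phi(z),1)^{T}$.

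Let $M$ be the $n\times n$ matrix with columns $(v_1,1),\dots,(v_{n-1},1)$, completed to a square matrix in the same way as in \eqref{e:n-by-n}. Then the matrix built from $\phi(v_1),\dots,\phi(v_{n-1})$ is precisely $TM$. Hence
\[
\det(TM)=\det T\cdot\det M .
\]
Since $T$ is block upper triangular, $\det T=\det D\cdot 1=\prod_{\ell}(\mu_\ell-\lambda_\ell)$. Taking absolute values gives the identity in the lemma, with factor $\prod_\ell|\lambda_\ell-\mu_\ell|$.

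The row picture of the same argument is more natural in the context of \eqref{e:n-by-n}, because there the matrix has rows $y_\ell$. Passing from $y_\ell\in\{0,1\}^m$ to $\phi(y_\ell)\in\{\lambda_\ell,\mu_\ell\}^m$ is the substitution
\[
y_\ell\mapsto(\mu_\ell-\lambda_\ell)\,y_\ell+\lambda_\ell\onevec ,
\]
applied to each $n\times n$ minor. Adding a multiple of the all-ones row leaves each minor unchanged, and scaling row $\ell$ multiplies it by $(\mu_\ell-\lambda_\ell)$. This is the same computation, done as elementary row operations.

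The only step needing care is the bookkeeping: checking that the appended all-ones coordinate is what makes the translation by $\lambda$ disappear, which holds in both the column and the row reading. Otherwise the argument is routine.
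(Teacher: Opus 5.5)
Your proof is correct and essentially the paper's: the paper's one-line proof (``multilinearity of the determinant'') is your row-operation paragraph (subtract $\lambda_\ell$ times the all-ones row, then factor $\mu_\ell-\lambda_\ell$ out of row $\ell$), and your block matrix $T$ is the same computation written as $\det(TM)=\det T\,\det M$. The lemma's indexing is off by one, so state explicitly that $M$ has $n$ columns $(v_1,1),\dots,(v_n,1)$, each transformed by $\phi$; appending an untransformed $\onevec_n$ column would break the identity $TM=$ (transformed matrix).
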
 
\begin{proof} This follows from the multilinearity of the determinant.
%	Use the fact that for $z = (z_1,\ldots,z_n)$ 
%	\[
%		\underbrace{ 
%		\left[
%		\begin{array}{c|ccc}
%			1 & 0 & \cdots & 0 \\
%			\hline 
%			\lambda_1 & \lambda_1' - \lambda_1 & & \\
%			\vdots & & \ddots &  \\
%			\lambda_n &  & & \lambda_n'-\lambda_n
%		\end{array}		\right] }_{=:T}
%		\begin{bmatrix} 1 \\ z \end{bmatrix} = \begin{bmatrix} 1 \\ \phi(z) \end{bmatrix},
%	\]
%	where $\det(T) = \prod_{i=1}^n (\lambda_i' -\lambda_i).$
\end{proof}

Applying Lemma~\ref{box:to:cube} to determinants of size $n$ (on the left-hand side) and of size $2$ (on the right-hand side), we reduce \eqref{xy} to the case where $y_\ell \in \{0,1\}^m$. 
%
%At this point, we want to go back from the interpretation in terms of the vectors $y_1,\ldots,y_{n-1}$ to the interpretation in terms of the matrix $\tilde A = (\tilde a_1,\ldots,\tilde a_m)$ and its columns $\tilde a_1,\ldots,\tilde a_m$. After the reduction, the columns $\tilde a_i$ belong to $\{1\} \times \{0,1\}^{n-1}$. In other words, we go back from the row perspective to the column perspective. That means, 
Equivalently, 
we have reduced \eqref{eq:proj_matrix_B} to the case $\tilde A = (\tilde a_1,\ldots,\tilde a_m)$ with $\tilde a_i \in \{0,1\}^{n-1}\times \{1\}$. 

\subsection{Third reduction of matrix $A \in \mathbb{R}^{m \times n}$}
In this step we reduce the number of columns of $A \in \mathbb{R}^{m \times n}$ from an arbitrary $m \geq n+1$ to $2^{n-1}$.

If the columns of $A$ lie in $\{0,1\}^{n-1}\times \{1\}$ then $A$ has at most $2^{n-1}$ distinct columns. We use the following elementary property of the volume polynomial: If
\[
f(x_1,\ldots,x_m) = \vol(x_1 K_1 + \cdots + x_m K_m),
\]
and some of the bodies coincide, say $K_1 = \cdots = K_r$ for some $1 <r \leq m$, then
\[
f(x_1,\ldots,x_m) = f(x_1 + \cdots + x_r, 0,\ldots,0, x_{r+1},\ldots,x_m).
\]

Thus, we may group the variables $x_i$ according to the groups of repeated columns of $A$,
%the vectors $\tilde a_i \in \{1\} \times \{0,1\}^{n-1}$ and sum the variables within each group,  
thereby reducing the number of variables to $2^{n-1}$. This implies that
it suffices to verify inequality \eqref{eq:proj_matrix_B} for all $\{0,1\}$-matrices
of dimension $n\times 2^{n-1}$.
%More precisely, for
%\[
%u \in U := \{1\} \times \{0,1\}^{n-1},
%\]
%we set
%\[
%s(u) := \sum_{i \in I(u)} x_i, 
%\quad \text{where} \quad
%I(u) := \{ i \in [m] : \tilde a_i = u \}.
%\]
%This allows us to rewrite $p(x,B)$ as $p(s,U)$, where $s = (s(u))_{u \in U} \in \mathbb{R}_+^{2^{n-1}}$.
%
%Here, $U$ is interpreted as an $n \times 2^{n-1}$ matrix whose first row is $\onevec$, and whose remaining $(n-1)$ rows consist of all $0/1$ vectors of length $n-1$, listed as columns.

%The same reduction can also be applied to $p_1(x,B)$ and $p_{1k}(x,B)$. This reduces \eqref{eq:proj_matrix_B} from the case of an arbitrary $m$ to the case $m = 2^{n-1}$, and where $A$ is fixed to be $U$, an $n \times 2^{n-1}$ matrix.
%
%In other words, after applying the described reduction, in order to prove inequality \eqref{eq:proj_matrix_B} for all choices of $A \in \mathbb{R}^{m \times n}$ with arbitrary $m$, it suffices to restrict to a single special case. Thus, it remains to verify that the function we call the \emph{hypercube polynomial}
%\begin{equation}\label{eq:hypercube_pol}
%f(s) := \prod_{k=2}^n p_{1,k}(s,U) - p(s,U)\, p_1(s,U)^{n-2} \quad \text{with} \quad U = \{1\} \times \{0,1\}^{n-1}
%\end{equation}
%is non-negative for all $s \in \mathbb{R}_+^{2^{n-1}}$. 

\subsection{Hypercube Inequality} We now restate \eqref{eq:proj_matrix_B} as a polynomial inequality
in $m=2^{n-1}$ variables. Its formulation uses geometry and combinatorics of the $(n-1)$-dimensional unit cube, so we call it a {\it Hypercube Inequality}. To simplify notation we set $d=n-1$ and denote the $d$-dimensional unit cube by $[0,1]^d$.

To each vertex $v\in\{0,1\}^d$ of $[0,1]^d$ we assign a variable $x_v$. For any collection $S$ of $d+1$ vertices
of $[0,1]^d$ we let $\Vol(S)$ denote the normalized volume of the convex hull of $S$, i.e., $\Vol(S)=d!\vol(\conv(S))$. Note that if $S=\{v_0,\dots, v_d\}$ then 
$$\Vol(S)=\Det\left[
\begin{matrix}v_0 & v_1 & \dots & v_d\\
1 & 1 & \dots & 1
\end{matrix}
\right]$$ 
and $\Vol(S)$ is a positive integer whenever $S$ is affinely independent.

Now let $\tilde A$ be the $(d+1)\times 2^d$ matrix with columns $(v, 1)$ over all possible $v\in \{0,1\}^d$. By \eqref{e:n-by-n} we have
$$f(x,\tilde A) =\sum_{S \in \binom{\{0,1\}^d}{d+1}} 
            \Vol(S)\prod_{v \in S} x_v.$$
Also, from \eqref{e:2by2} we obtain
$$
f(x,\tilde A_{\{\ell,n\},\ast})=\sum_{v_\ell\neq v'_\ell}x_{v}x_{v'}=\Big(\sum_{v_\ell=0}x_v\Big)\Big(\sum_{v_\ell=1}x_v\Big).
$$
Note that in the right-hand side we sup up the variables over the vertices of the facets determined by $v_\ell=0$ and $v_\ell=1$, respectively.
Therefore, \eqref{eq:proj_matrix_B} becomes
\begin{equation}\label{eq:hypercube_ineq-2}
	\left( 
        \sum_{S \in \binom{\{0,1\}^d}{d+1}} 
            \Vol(S)\prod_{v \in S} x_v
            \right)
        \left( 
        \sum_{v \in \{0,1\}^d} x_v 
    \right)^{d-1}  
    \le 
    \prod_{\substack{F\subset [0,1]^d\\ F\, \text{facet}}} 
    \sum_{v \in F \cap \{0,1\}^d} 
        x_v,
\end{equation}

The difference of the right- and left-hand sides of \eqref{eq:proj_matrix_B} is
a homogeneous polynomial of total degree $2d$ and is invariant under the symmetry group of the cube.
%$\Sym([0,1]^d)\cong {\mathbf S}_4\times{\mathbf S}_2$. 
We denote this polynomial by $H_d(x)$ and call it the {\it hypercube polynomial}.

\begin{rmk}\label{rem:grassmannian}
We would like to mention that the reduction from this section has an interpretation in terms of the absolute Grassmannian
$\operatorname{Abs}(\operatorname{Gr}(d,n))$, where $\operatorname{Abs}$ denotes the coordinate-wise absolute value map.
We write $q_I=|p_I|$ for the absolute value of the Plücker coordinate.
Then, inequality \eqref{eq:proj_matrix} can be formulated as
\[
\left( \sum_{I \in \binom{[m]}{n}} q_I \right) \left( \sum_{i=1}^m q_{\{i, m+1,\dots, m+n-1\}} \right)^{n-2} \leq \prod_{1 \leq i_1 < \ldots < i_{n-2} \leq n-1}
\left( \sum_{1 \leq i < j \leq m} q_{\{i, j, m+i_1,\dots, m+i_{n-2}\}} \right)
\]
for all $q \in \operatorname{Abs}(\operatorname{Gr}(n,m+n-1))$. 
Moreover, the reduction shows, in particular, that it suffices to verify this inequality for the fixed value $m = 2^{n-1}$, rather than for arbitrary $m \ge n$.
\end{rmk}

\section{Explicit form of Hypercube Inequality \eqref{eq:hypercube_ineq} for $d=2$ and $d=3$}
In this section we give an explicit description of the Hypercube Inequality and the hypercube
polynomial $H_d(x)$ in the smallest cases $d=2,3$. To simplify notation
we use decimal indices for our variables $x_i$, for $i=0,\dots, 2^d$ instead of the binary indices $x_v$, for $v\in\{0,1\}^d$ that were introduced in the previous section.

It was observed in \cite{AS} (and is not hard to verify directly) that $H_2(x)$ is a perfect square.
Indeed, every triangle with vertices in $\{0,1\}^2$ has normalized $2$-dimensional volume 1, hence, we
have
\begin{equation*}\label{eq:HI_d=2}
H_2(x) = (x_0+x_1)(x_0+x_2)(x_2+x_3)(x_1+x_3)-  \left(\sum_{1 \le 0<j<k \le 3} x_i x_j x_k\right)
\Big(\sum_{i=0}^{3} x_i\Big)
= (x_0 x_3 - x_1 x_2)^2. 
\end{equation*}
This shows that the Hypercube Inequality \eqref{eq:hypercube_ineq} holds for $d=2$. It will be convenient to express it as follows

\begin{equation}\label{e:HI_d=2}
\Big(\sum_{i=0}^{3} x_i\Big)\big((x_2+x_3)x_0 x_1 + (x_0+x_1)x_2x_3\big)
\leq
(x_0+x_1)(x_0+x_2)(x_2+x_3)(x_1+x_3). 
\end{equation}

We now describe the Hypercube Inequality \eqref{eq:hypercube_ineq} in the case $d=3$. In the left-hand side, we have non-degenerate simplices with vertices among $\{x_0,\dots,x_7\}$. Any such simplex must contain vertices from both the lower facet $\operatorname{conv}(\{x_0,x_1,x_2,x_3\})$ and the upper facet $\operatorname{conv}(\{x_4,x_5,x_6,x_7\})$. Hence, the following cases may occur (see Figure~\ref{fig:simplex}):

\begin{enumerate}[(i): ]
    \item \textbf{One vertex from the lower facet and three from the upper facet, or vice versa.}  
    The contribution of these simplices to the left‑hand side of the inequality is
    \[
    \Bigl(\sum_{i=0}^{3} x_i\Bigr)\Bigl(\sum_{4\le i<j<k\le 7} x_i x_j x_k\Bigr)
      \;+\;
    \Bigl(\sum_{i=4}^{7} x_i\Bigr)\Bigl(\sum_{1\le 0<j<k\le 3} x_i x_j x_k\Bigr).
    \]
    An example of such a simplex is shown in Type (i) of Figure~\ref{fig:simplex}.

    \item \textbf{Two vertices from the lower facet and two from the upper facet, not all lying in a common plane.}  
    One may take an edge from the upper facet together with a non‑parallel edge from the lower facet, or an edge from the upper facet together with a diagonal from the lower facet (the symmetric cases are included automatically). These geometrically distinct possibilities are expressed by the products
\begin{align*}
&(x_0+x_1)(x_2+x_3)(x_4x_5+x_6x_7),\\
&(x_0+x_2)(x_1+x_3)(x_4x_6+x_5x_7),\\
&(x_0+x_3)(x_1+x_2)(x_5x_6+x_4x_7).
\end{align*}
    Examples are shown in the second and third subfigures of Figure~\ref{fig:simplex}.
    
    The remaining possibility, a diagonal from the lower facet together with a non‑parallel diagonal from the upper facet, yields the additional terms
    \[
    2x_0x_3x_5x_6 + 2x_1x_2x_4x_7,
    \]
    and is illustrated in the fourth subfigure of Figure~\ref{fig:simplex}.
\end{enumerate}

%We now describe the hypercube inequality \eqref{eq:hypercube_ineq} in the case $d=3$. On the left-hand side, we consider non-degenerate simplices with vertices among $\{x_0,\dots,x_7\}$, the vertices of the cube. Any such simplex must contain vertices from both the lower facet $\operatorname{conv}(\{x_0,x_1,x_2,x_3\})$ and the upper facet $\operatorname{conv}(\{x_4,x_5,x_6,x_7\})$. Hence, the following cases may occur (see Figure~\ref{fig:simplex}):
%\begin{enumerate}[(i)]
%    \item one vertex from the lower facet and three from the upper facet, or vice versa;
%    \item two vertices from the lower facet and two from the upper facet, not all lying in a common plane.
%\end{enumerate}

All non-degenerate simplices in the $3$-dimensional cube with vertices among those of the cube have normalized volume $1$, except for the simplices $\operatorname{conv}(\{x_0, x_3, x_5, x_6\})$ and $\operatorname{conv}(\{x_1, x_2, x_4, x_7\})$, which each have normalized volume $2$.

\begin{figure}
\begin{subfigure}{0.45\textwidth}
\centering
\begin{tikzpicture}[scale=2, x={(1.0298,0.03356)}, y={(0,1)}, z={(-0.2963,-0.3835)}]

\coordinate (s2) at (0,0,0);
\coordinate (s4) at (1,0,0);
\coordinate (s8) at (1,1,0);
\coordinate (s6) at (0,1,0);

\coordinate (s1) at (0,0,1);
\coordinate (s3) at (1,0,1);
\coordinate (s7) at (1,1,1);
\coordinate (s5) at (0,1,1);

\draw[dashed] (s1) -- (s2)-- (s4);
\draw (s4) -- (s3) -- (s1);

\draw[dred, thick] (s6) -- (s5) -- (s7) -- (s6); 
\draw[dred, thick] (s5) -- (s7) -- (s4) -- (s5); 
\draw[dred, thick] (s5) -- (s6) -- (s4) -- (s5);

\fill[dred,opacity=0.15]
(s6) -- (s5) -- (s7) -- (s6);
\fill[dred,opacity=0.15]
(s5) -- (s7) -- (s4) -- (s5); 
\fill[dred,opacity=0.15]
(s5) -- (s6) -- (s4) -- (s5);

\draw (s5) -- (s6) -- (s8) -- (s7) -- cycle;

\draw (s1) -- (s5);
\draw[dashed] (s2) -- (s6);
\draw (s3) -- (s7);
\draw (s4) -- (s8);

\fill[black] (s4) circle (0.03);
\fill[black] (s5) circle (0.03);
\fill[black] (s6) circle (0.03);
\fill[black] (s7) circle (0.03);

\node at (s1) [below left] {$x_0$};
\node at (s2) [below right] {$x_1$};
\node at (1.1,-0.15,1) {$x_2$};
\node at (1.1,-0.15,0) {$x_3$};

\node at (s5) [below left] {$x_4$};
\node at (s6) [below right] {$x_5$};
\node at (1.15,0.9,1) {$x_6$};
\node at (1.15,0.9,0) {$x_7$};
\end{tikzpicture}
\caption{Type (i): $\Vol(\{x_3,x_4,x_5, x_6\})=1$}
\vspace{10mm}
\end{subfigure}
\hfill
\begin{subfigure}{0.45\textwidth}
\centering
\begin{tikzpicture}[scale=2, x={(1.0298,0.03356)}, y={(0,1)}, z={(-0.2963,-0.3835)}]

\coordinate (s2) at (0,0,0);
\coordinate (s4) at (1,0,0);
\coordinate (s8) at (1,1,0);
\coordinate (s6) at (0,1,0);

\coordinate (s1) at (0,0,1);
\coordinate (s3) at (1,0,1);
\coordinate (s7) at (1,1,1);
\coordinate (s5) at (0,1,1);

\draw[dashed] (s1) -- (s2)-- (s4);
\draw (s4) -- (s3) -- (s1);

\draw (s5) -- (s6) -- (s8) -- (s7) -- cycle;

\draw (s1) -- (s5);
\draw[dashed] (s2) -- (s6);
\draw (s3) -- (s7);
\draw (s4) -- (s8);

\draw[gold, thick] (s3) -- (s4) -- (s6) -- (s3);
\draw[gold, thick] (s3) -- (s4) -- (s8) -- (s3);
\draw[gold, thick] (s3) -- (s6) -- (s8) -- (s3);

\fill[orange,opacity=0.15] (s3) -- (s4) -- (s6) -- (s3);
\fill[orange,opacity=0.15] (s3) -- (s4) -- (s8) -- (s3);
\fill[orange,opacity=0.15] (s3) -- (s6) -- (s8) -- (s3);

\fill[black] (s3) circle (0.03);
\fill[black] (s4) circle (0.03);
\fill[black] (s6) circle (0.03);
\fill[black] (s8) circle (0.03);

\node at (s1) [below left] {$x_0$};
\node at (s2) [below right] {$x_1$};
\node at (1.1,-0.15,1) {$x_2$};
\node at (1.1,-0.15,0) {$x_3$};

\node at (s5) [below left] {$x_4$};
\node at (s6) [below right] {$x_5$};
\node at (1.15,0.9,1) {$x_6$};
\node at (1.15,0.9,0) {$x_7$};
\end{tikzpicture}
\caption{Type (ii): $\Vol(\{x_2,x_3,x_5, x_7\})=1$}
\vspace{10mm}
\end{subfigure}
\hfill
\begin{subfigure}{0.45\textwidth}
\centering
\begin{tikzpicture}[scale=2, x={(1.0298,0.03356)}, y={(0,1)}, z={(-0.2963,-0.3835)}]

\coordinate (s2) at (0,0,0);
\coordinate (s4) at (1,0,0);
\coordinate (s8) at (1,1,0);
\coordinate (s6) at (0,1,0);

\coordinate (s1) at (0,0,1);
\coordinate (s3) at (1,0,1);
\coordinate (s7) at (1,1,1);
\coordinate (s5) at (0,1,1);

\draw[dashed] (s1) -- (s2)-- (s4);
\draw (s4) -- (s3) -- (s1);

\draw (s5) -- (s6) -- (s8) -- (s7) -- cycle;

\draw (s1) -- (s5);
\draw[dashed] (s2) -- (s6);
\draw (s3) -- (s7);
\draw (s4) -- (s8);

\draw[gold, thick] (s2) -- (s3) -- (s6) -- (s2);
\draw[gold, thick] (s2) -- (s3) -- (s8) -- (s2);
\draw[gold, thick] (s2) -- (s6) -- (s8) -- (s2);

\fill[orange,opacity=0.15] (s2) -- (s3) -- (s6) -- (s2);
\fill[orange,opacity=0.15] (s2) -- (s3) -- (s8) -- (s2);
\fill[orange,opacity=0.15] (s2) -- (s6) -- (s8) -- (s2);

\fill[black] (s2) circle (0.03);
\fill[black] (s3) circle (0.03);
\fill[black] (s6) circle (0.03);
\fill[black] (s8) circle (0.03);

\node at (s1) [below left] {$x_0$};
\node at (s2) [below right] {$x_1$};
\node at (1.1,-0.15,1) {$x_2$};
\node at (1.1,-0.15,0) {$x_3$};

\node at (s5) [below left] {$x_4$};
\node at (s6) [below right] {$x_5$};
\node at (1.15,0.9,1) {$x_6$};
\node at (1.15,0.9,0) {$x_7$};
\end{tikzpicture}
\caption{Type (ii): $\Vol(\{x_1,x_2,x_5, x_7\})=1$}
\end{subfigure}
\hfill
\begin{subfigure}{0.45\textwidth}
\centering
\begin{tikzpicture}[scale=2, x={(1.0298,0.03356)}, y={(0,1)}, z={(-0.2963,-0.3835)}]

\coordinate (s2) at (0,0,0);
\coordinate (s4) at (1,0,0);
\coordinate (s8) at (1,1,0);
\coordinate (s6) at (0,1,0);

\coordinate (s1) at (0,0,1);
\coordinate (s3) at (1,0,1);
\coordinate (s7) at (1,1,1);
\coordinate (s5) at (0,1,1);

\draw[dashed] (s1) -- (s2)-- (s4);
\draw (s4) -- (s3) -- (s1);

\draw (s5) -- (s6) -- (s8) -- (s7) -- cycle;

\draw (s1) -- (s5);
\draw[dashed] (s2) -- (s6);
\draw (s3) -- (s7);
\draw (s4) -- (s8);

\draw[dred, thick] (s2) -- (s3) -- (s5) -- (s2); 
\draw[dred, thick] (s2) -- (s3) -- (s8) -- (s2);
\draw[dred, thick] (s3) -- (s5) -- (s8) -- (s3);

\fill[dred,opacity=0.15] (s2) -- (s3) -- (s5) -- (s2); 
\fill[dred,opacity=0.15] (s2) -- (s3) -- (s8) -- (s2);
\fill[dred,opacity=0.15] (s3) -- (s5) -- (s8) -- (s3);

\fill[black] (s2) circle (0.03);
\fill[black] (s3) circle (0.03);
\fill[black] (s5) circle (0.03);
\fill[black] (s8) circle (0.03);

\node at (s1) [below left] {$x_0$};
\node at (s2) [below right] {$x_1$};
\node at (1.1,-0.15,1) {$x_2$};
\node at (1.1,-0.15,0) {$x_3$};

\node at (s5) [below left] {$x_4$};
\node at (s6) [below right] {$x_5$};
\node at (1.15,0.9,1) {$x_6$};
\node at (1.15,0.9,0) {$x_7$};
\end{tikzpicture}
\caption{Type (ii): $\Vol(\{x_1,x_2,x_4, x_7\})=2$}
\end{subfigure}
\caption{Simplices in the 3-cube classified by vertex distribution across the lower and upper facets}
\label{fig:simplex}
\end{figure}

%The main result of this work is the verification of the hypercube inequality \eqref{eq:hypercube_ineq} for $d=3$, thereby confirming the validity of \eqref{eq:oursubmod} for $n=4$. 

\begin{thm}[Hypercube inequality, $d=3$]\label{thm:HI3}
Let $x_0,\dots,x_7 \ge 0$. Then
\begin{align}\label{eq:HI_d=3}
&\left(\sum_{i=0}^{7} x_i\right)^2 
\Big[
    \Bigl(\sum_{i=0}^{3} x_i\Bigr)\Bigl(\sum_{4\le i<j<k\le 7} x_i x_j x_k\Bigr)+
    \Bigl(\sum_{i=4}^{7} x_i\Bigr)\Bigl(\sum_{1\le 0<j<k\le 3} x_i x_j x_k\Bigr)
\notag \\
&\hspace{5cm} + (x_0+x_1)(x_2+x_3)(x_4x_5+x_6x_7) \notag \\
&\hspace{5cm}  + (x_0+x_2)(x_1+x_3)(x_4x_6+x_5x_7) \notag \\
&\hspace{5cm}  + (x_0+x_3)(x_1+x_2)(x_5x_6+x_4x_7) \notag \\
&\hspace{5cm}  + 2x_0x_3x_5x_6 + 2x_1x_2x_4x_7
\Big]\notag \\
&\hspace{.0cm} \leq 
\Big(\sum_{i=0}^{3} x_i\Big)
\Big(\sum_{i=4}^{7} x_i\Big)
\Big(\sum_{i \in \{0,2,4,6\}} x_i\Big) 
\Big(\sum_{i \in \{1,3,5,7\}} x_i\Big) 
\Big(\sum_{i \in \{0,1,4,5\}} x_i\Big) 
\Big(\sum_{i \in \{2,3,6,7\}} x_i\Big). 
\end{align}
%Equality holds if and only if at least one factor in the product on the right‑hand side is zero. 
\end{thm}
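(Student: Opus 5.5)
We plan to prove that $H_3(x)=\text{RHS}-\text{LHS}$ of \eqref{eq:HI_d=3} is nonnegative on $\R_+^8$ by combining the structure coming from the $d=2$ case with a computer-assisted certificate. The introduction announces a sum-of-squares certificate plus a nonnegative part, and that is the form we aim for:
\[
H_3(x)=\sum_j s_j(x)^2+N(x),
\]
where each $s_j$ is a polynomial and $N$ has only nonnegative coefficients. Such an identity immediately gives $H_3\ge0$ on the nonnegative orthant.

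\textbf{Step 1: symmetry reduction.} $H_3$ is homogeneous of degree $6$ and invariant under the hyperoctahedral group of order $48$ acting on $x_0,\dots,x_7$. We decompose the space of monomials into orbits. For the squares, we restrict to cubic forms $s_j$ built from invariant or isotypic combinations of degree $3$. The nonnegative part $N$ is a combination of orbit sums of monomials. This cuts the size of the semidefinite program drastically.

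\textbf{Step 2: use $d=2$ as a guide.} Write $L=\sum_{i\le3}x_i$, $U=\sum_{i\ge4}x_i$, and let $T=\sum_{i\le 7}x_i=L+U$. The right-hand side is $LU$ times the four other facet sums. The type (i) terms in the left-hand side are $L\,e_3(\text{upper})+U\,e_3(\text{lower})$, the analogue of the bracket in \eqref{e:HI_d=2}. Projecting the cube onto the square by merging $x_i$ with $x_{i+4}$ turns the four side-facet sums into the $d=2$ factors. So we expect a decomposition in which $(x_0x_3-x_1x_2)^2$, $(x_4x_7-x_5x_6)^2$ and the mixed version $\bigl((x_0+x_4)(x_3+x_7)-(x_1+x_5)(x_2+x_6)\bigr)^2$ appear, multiplied by nonnegative quadratic factors such as $LU$ or $T^2$. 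These products are squares times monomial-positive multipliers, which is compatible with the target form.

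\textbf{Step 3: compute the certificate.} We first try an ansatz of this form, $\sum(\text{positive quadratic})\cdot(\text{binomial square})$ plus a coefficientwise nonnegative remainder. If it fails, we solve a symmetry-reduced SDP for a Gram matrix $Q\succeq0$ together with nonnegative coefficients for $N$. We then round to rational entries and verify the identity exactly in rational arithmetic.

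\textbf{Main obstacle.} We expect Step 3 to be hardest. $H_3$ has equality cases beyond the boundary ones, for example whenever one facet sum and the matching terms vanish, and these zeros force the certificate to sit on the boundary of the cone. Numerical SDP solutions then do not round directly to exact certificates. We would first identify the real zeros of $H_3$ in $\R_+^8$ and impose that every $s_j$ and every monomial of $N$ vanish on them. This fixes the kernel of $Q$, so that after a facial reduction an exact rational rounding becomes possible.
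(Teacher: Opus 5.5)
Your plan has a genuine gap: the certificate you set out to compute in Step~3 does not exist. Suppose $H_3=\sum_j s_j^2+N$ with $N$ coefficientwise nonnegative. On the face $x_0=x_1=x_2=x_3=0$ of $\R^8_+$ we have $H_3=0$: the right-hand side contains the factor $\sum_{i\le3}x_i$, and every term in the bracket on the left contains a bottom variable. Since $N\ge0$ there, each $s_j$ vanishes on this face. The face has nonempty interior in the subspace $\{x_0=\dots=x_3=0\}$, so each $s_j$ lies in the ideal $(x_0,x_1,x_2,x_3)$. Hence $\sum_j s_j^2$ has no monomial of degree exactly one in $x_0,\dots,x_3$, and that part of $H_3$ must come from $N$ alone. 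Put $L=\sum_{i\le3}x_i$, $U=\sum_{i\ge4}x_i$, and let $e_3$ be the third elementary symmetric polynomial. By the $d=2$ identity, this part of $H_3$ is
\[
LU\Bigl[(x_4+x_5)(x_4+x_6)(x_6+x_7)(x_5+x_7)-U\,e_3(x_4,\dots,x_7)\Bigr]=LU\,(x_4x_7-x_5x_6)^2 .
\]
It contains $x_0x_4^2x_5x_6x_7$ with coefficient $-2$: a direct count gives $5$ from the right-hand side of \eqref{eq:HI_d=3} and $7$ from the left. So your Gram-matrix SDP is infeasible, whatever symmetry reduction you use.

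Products such as $LU\cdot(\,\cdot\,)^2$ in your Step~2 are not sums of squares, so they do not fit that target form. They belong to the larger cone $\sum_\alpha x^\alpha\sigma_\alpha+N$ with $\sigma_\alpha$ sums of squares. There the obstruction disappears, but your plan then becomes essentially what the authors report having tried. A numerical decomposition $H_3=\sum_{i,j}x_ix_jS_{ij}$ appears to exist, yet its entries look irrational and rounding destroys it. Nothing in your proposal ensures an exact certificate, so it is a search strategy rather than a proof. Also, the zeros of $H_3$ in $\R^8_+$ are exactly the six facet faces, so there are no further equality cases to exploit for facial reduction.

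The missing idea is to certify $mkH_3$ rather than $H_3$, with $m=\sum_{i\le3}x_i$ and $k=\sum_{i\ge4}x_i$. This multiplier turns the offending leading term into the genuine square $\bigl(mk(x_4x_7-x_5x_6)\bigr)^2$. The paper's argument runs as follows:
\begin{enumerate}[(1)]
\item Dispose of $m=0$ or $k=0$ trivially.
\item Introduce the edge variables $z_0=x_0+x_1,\dots,z_7=x_5+x_7$.
\item Apply the $d=2$ inequality to the bottom and top faces: $m\,e_3(x_0,\dots,x_3)\le z_0z_1z_2z_3$ and $k\,e_3(x_4,\dots,x_7)\le z_4z_5z_6z_7$.
\item Discard the nonnegative term $2mk(x_0x_3x_4x_7+x_1x_2x_5x_6)$ and factor what remains as $(m+k)^2(kz_0z_2+mz_4z_6)(kz_1z_3+mz_5z_7)$.
\item Apply the $d=2$ inequality twice more, in edge variables, to the projections along the two directions parallel to the top face, using $m=z_0+z_2=z_1+z_3$ and $k=z_4+z_6=z_5+z_7$.
\end{enumerate}
Keeping track of the discarded squares gives the exact certificate for $mkH_3$ in Remark~\ref{R:equality_case}, with no numerics at all.
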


\section{Proof of the Hypercube inequality \eqref{eq:hypercube_ineq} in dimension $d=3$}\label{sec:proof_HI}

The idea of the proof is to combine several hypercube inequalities in dimension two. 
Each edge of the 3-cube defines an ``edge variable'' which is the sum of the variables
corresponding to its endpoints. By projecting the 3-cube along two coordinate directions we obtain two hypercube inequalities in $d=2$ in the corresponding edge variables.
Additionally, we apply the hypercube inequality in $d=2$ to the top and bottom faces of the $3$-cube. In this sense, the proof is an inductive argument from $d=2$ to $d=3$.

%\begin{thm}\label{T:HI_d=3}
%    The hypercube inequality \eqref{eq:hypercube_ineq} 
%    holds for $d=3$ and $x_0,\dots,x_7\in\R_+$.
%\end{thm}

\begin{proof}[Proof of Theorem \ref{thm:HI3}]
Let
\[
m = \sum_{i=0}^3 x_i, \qquad k = \sum_{i=4}^7 x_j
\]
be the sums of the variables for the bottom and top faces of the cube. We introduce the edge variables
\[
\begin{aligned}
z_0 &= x_0 + x_1, \quad z_1 = x_0 + x_2, \quad z_2 = x_2 + x_3, \quad z_3 = x_1 + x_3,\\
z_4 &= x_4 + x_5, \quad z_5 = x_4 + x_6, \quad z_6 = x_6 + x_7, \quad z_7 = x_5 + x_7.
\end{aligned}
\]

Then we can express the right-hand side $h^+$ and the left-hand side $h^-$ of \eqref{eq:HI_d=3} as follows
\begin{align}\label{eq:HI_d=3_simple}
h^+&=(z_0+z_2)(z_0+z_4)(z_2+z_6)(z_1+z_5)(z_3+z_7)(z_5+z_7),\\
h^-&=(m+k)^2 \Big[
k\!\sum_{0 \le i<j<k \le 3} x_i x_j x_k +
m\!\sum_{4 \le i<j<k \le 7} x_i x_j x_k \notag \\
&\quad + z_0z_2(x_4x_5+x_6x_7) \notag + z_1z_3(x_4x_6+x_5x_7) + (x_0+x_3)(x_1+x_2)(x_5x_6+x_4x_7) \notag \\
&\quad + 2x_0x_3x_5x_6 + 2x_1x_2x_4x_7
\Big].\notag 
\end{align}
The latter simplifies to
\begin{align}\label{e:f_minus}
h^-=(m+k)^2 \Big[
&k\!\sum_{0 \le i<j<k \le 3} x_i x_j x_k +
m\!\sum_{4 \le i<j<k \le 7} x_i x_j x_k \notag \\
&\quad + z_0z_2z_5z_7 + z_1z_3z_4z_6- 2(x_0x_3x_4x_7 + x_1x_2x_5x_6)\Big].
\end{align}

It is easy to see that when $m=0$ or $k=0$, both $h^+$ and $h^-$ vanish and, hence, $h^-\leq h^+$ holds trivially. Thus, we will assume that $m$ and $k$ are positive and show $mkh^-\leq mkh^+$.

First, we apply the hypercube inequality for $d=2$ \eqref{e:HI_d=2} to the top and bottom faces of the cube
$$
m\!\sum_{0 \le i<j<k \le 3} x_i x_j x_k \leq z_0z_1z_2z_3\quad\text{and}\quad 
k\!\sum_{4 \le i<j<k \le 7} x_i x_j x_k \leq z_4z_5z_6z_7
$$
to obtain from \eqref{e:f_minus}
\begin{align*}
mkh^-&\leq  (m+k)^2\Big[k^2z_0z_1z_2z_3+m^2z_4z_5z_6z_7\\ 
&\quad +mk(z_0z_2z_5z_7 + z_1z_3z_4z_6)- 2mk(x_0x_3x_4x_7 + x_1x_2x_5x_6)\Big]. 
\end{align*}
Ignoring the negative term and factoring, we get
\begin{equation}\label{e:mkf_minus}
 mkh^-\leq (m+k)^2(kz_0z_2 + mz_4z_6)(kz_1z_3 + mz_5z_7).
\end{equation}

Next, note that $z_0,z_2,z_4,z_6$ correspond to the edges in the $x$-direction and $z_1,z_3,z_5,z_7$ correspond to the edges in the $y$-direction. 
Applying the hypercube inequality for $d=2$ \eqref{e:HI_d=2} to these 4-tuples and
noting that %$m=z_0+z_2=z_1+z_3$ and $k=z_4+z_6=z_5+z_7$
\begin{align*}
m &= z_0 + z_2 = z_1 + z_3, \\
k &= z_4 + z_6 = z_5 + z_7, 
\end{align*}
we obtain

\begin{align*}
(m+k)(kz_0z_2 + mz_4z_6) &\leq mk(z_0 + z_4)(z_2 + z_6),\\
(m+k)(kz_1z_3 + mz_5z_7) &\leq mk(z_1 + z_5)(z_3 + z_7).    
\end{align*}
Taking the product of these inequalities gives
\begin{equation}\label{e:mkf_plus}
 (m+k)^2(kz_0z_2 + mz_4z_6)(kz_1z_3 + mz_5z_7) \leq mkh^+.   
\end{equation}
Combining \eqref{e:mkf_minus} and \eqref{e:mkf_plus}, we get $mkh^-\leq mkh^+$.
\end{proof}

\begin{rmk}\label{R:equality_case}
    In the above estimates we used the hypercube inequality in dimension two, as well as ignored the negative term $2mk(x_0x_3x_4x_7 + x_1x_2x_5x_6)$. We saw in \eqref{eq:HI_d=2} that the hypercube polynomial in dimension two is a perfect square. This observation allows us to write an explicit sum-of-squares based certificate for the hypercube polynomial $H_3(x)=h^+-h^-$ scaled by $mk$:
\begin{align*}\label{e:mkf-certificate}
mkH_3(x) =&(m+k)^2\Big[k^2(x_0x_3-x_1x_2)^2+m^2(x_4x_7-x_5x_6)^2
+ 2mk(x_0x_3x_4x_7 + x_1x_2x_5x_6)\Big]\\
 &+ (m+k)\Big[(kz_0z_2 + mz_4z_6)(z_1z_7-z_3z_5)^2
+(kz_1z_3 + mz_5z_7)(z_0z_6-z_2z_4)^2\Big]\notag\\
&+(z_0z_6-z_2z_4)^2(z_1z_7-z_3z_5)^2\notag. 
\end{align*}
\end{rmk}

The next result describes the equality case of the hypercube inequality in $d=3$. The proof uses a simple observation: a $2\times 2$ singular matrix with a zero entry must have a zero row or a column. We apply this observation iteratively, first for the vertex variables in a facet and then for the edge variables in a coordinate projection of the cube.

\begin{thm}
    The 3-dimensional Hypercube Inequality \eqref{eq:HI_d=3} holds with equality if and only if for at least one facet of the cube the corresponding $x_i$ are zero. Moreover, the inequality is tight.
\end{thm}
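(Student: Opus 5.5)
The "if" direction is immediate. If all $x_i$ on some facet vanish, the right-hand side of \eqref{eq:HI_d=3} is zero. Any simplex contributing to the left-hand side must have a vertex on every facet: otherwise its $d+1$ vertices lie in the opposite facet, which is a $(d-1)$-dimensional affine subspace, so it is degenerate. Hence every monomial on the left-hand side also vanishes, and both sides are $0$.

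For the "only if" direction I would use the certificate of Remark~\ref{R:equality_case}. Suppose equality holds and no facet sum vanishes; in particular $m,k>0$. Then $mkH_3(x)=0$, and since every summand of the certificate is nonnegative, each one vanishes. Because $(m+k)^2k^2>0$ and $(m+k)^2m^2>0$, we get
\[
x_0x_3=x_1x_2,\qquad x_4x_7=x_5x_6,\qquad x_0x_3x_4x_7=0,\qquad x_1x_2x_5x_6=0 .
\]
By the last two equations some variable is zero, say $x_0x_3=0$ (the other case is symmetric under swapping the bottom and top facets). Then also $x_1x_2=0$, so the $2\times2$ matrix $\begin{pmatrix}x_0&x_1\\x_2&x_3\end{pmatrix}$ is singular and has a zero entry, hence a zero row or column. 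Thus two adjacent vertices of the bottom facet carry zero weight, i.e.\ some bottom edge variable $z_j$ vanishes. The same reasoning applies on the top facet when $x_4x_7=0$. If instead $x_4x_7=x_5x_6\neq 0$, then $x_1x_2x_5x_6=0$ together with the bottom equations forces $x_0x_3=x_1x_2=0$, so we are back in the bottom case. In the remaining case the weights are free, and we must argue with the edge variables instead.

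Next I would use the terms involving edge variables. The coefficients $(m+k)(kz_0z_2+mz_4z_6)$ and $(m+k)(kz_1z_3+mz_5z_7)$, and the final product term, must vanish, which gives
\[
(z_0z_6-z_2z_4)(z_1z_7-z_3z_5)=0 .
\]
For each factor that vanishes, consider the corresponding $2\times 2$ matrix of edge variables, $\begin{pmatrix}z_0&z_2\\z_4&z_6\end{pmatrix}$ or $\begin{pmatrix}z_1&z_3\\z_5&z_7\end{pmatrix}$. This matrix is a coordinate projection of the cube onto a square, and its rows and columns are facets: for instance $z_0+z_4$ and $z_2+z_6$ are facet sums, and $z_0+z_2=m$, $z_4+z_6=k$. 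If the matrix has a zero entry, singularity forces a zero row or column, hence a zero facet sum, a contradiction. If a factor is nonzero, its coefficient $kz_0z_2+mz_4z_6$ (respectively $kz_1z_3+mz_5z_7$) must be zero, which again forces zero entries in the matrix. The main obstacle is organizing this case analysis so that each case produces a singular edge-matrix with a zero entry. I expect to handle it by combining the vertex-level zero found above, which kills some $z_j$, with the vanishing coefficients, and then applying the $2\times2$ observation to the appropriate projection.

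For tightness, I would exhibit a family of points approaching the boundary along which the ratio of the left-hand side to the right-hand side of \eqref{eq:HI_d=3} tends to $1$. For example, take all variables on one facet equal to $\varepsilon$ and the rest equal to $1$, and expand both sides in $\varepsilon$. I would check that the leading terms agree, which shows that the constant $1$ in the inequality cannot be improved.
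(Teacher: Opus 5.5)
Your proposal is correct and follows essentially the paper's route: the certificate of Remark~\ref{R:equality_case}, the observation that a singular $2\times 2$ matrix with a zero entry has a zero row or column (applied first to a facet, then to the edge variables of a coordinate projection, whose row and column sums are $m$, $k$ and facet sums), and the same $\varepsilon$-family for tightness, on which both sides equal $256\varepsilon+O(\varepsilon^2)$. Your case split on the two edge determinants is more careful than the paper, which asserts that both $z_0z_6=z_2z_4$ and $z_1z_7=z_3z_5$ hold at equality (false, e.g., for $x_3=x_5=1$ and all other $x_i=0$); to close it, note that the coefficient of $(z_0z_6-z_2z_4)^2$ is $kz_1z_3+mz_5z_7$ and vice versa, so a nonzero factor forces a zero entry in the \emph{other}, singular, matrix, while if both factors vanish the zero edge variable from your vertex step (which always exists, so there is no ``remaining case'') sits in one of the two singular matrices.
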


\begin{proof}
First, suppose $x_i=0$ for all $x_i$ in some facet $F$ of the cube. Clearly, this implies that $h^+=0$. Also, since every 3-simplex in the cube contains a vertex in $F$, the left-hand side $h^-$ vanishes as well.

Conversely, suppose $h^+=h^-$. If $m=0$ or $k=0$ there is nothing to show, so we assume that $m,k$ are positive. As we saw in the proof of Theorem~\ref{thm:HI3} (see also Remark~\ref{R:equality_case}), the equality case enforces the corresponding equality cases for $d=2$, hence, the following quadratic relations must hold:
\begin{equation}\label{e:quad-rel}
 x_0x_3-x_1x_2=0,\quad x_4x_7-x_5x_6=0,
\quad z_0z_6-z_2z_4=0,\quad z_1z_7-z_3z_5=0.   
\end{equation}
Additionally, we must have $x_0x_3x_4x_7 = x_1x_2x_5x_6=0$. The latter implies that $x_i=0$ for some $i\in[8]$, and by symmetry we can assume $x_0=0$. Then the first relation in \eqref{e:quad-rel} implies $x_1x_2=0$. Again, by symmetry, we may assume $x_1=0$. We have $z_0=x_0+x_1=0$, so the third relation in \eqref{e:quad-rel} implies $z_2=0$ or $z_4=0$. Since by assumption $m=z_0+z_2>0$, it follows that $z_0=z_4=0$, i.e. $x_0=x_1=x_4=x_5=0$, and we are done.

Finally, we show that the hypercube inequality is tight, that is, there is no constant $0<c<1$ such that $h^-\leq ch^+$ holds in the positive orthant. Consider 
\[
x_0 = x_1 = x_2 = x_3 = 1,\qquad x_4 = x_5 = x_6 = x_7 = \varepsilon,
\]
for some $\varepsilon>0$. Then the ratio becomes
\[
\frac{h^+}{h^-} = \frac{4\cdot 4\varepsilon\cdot(2+2\varepsilon)^4}{(4+4\varepsilon)^2(16\varepsilon+16\varepsilon^3+28\varepsilon^2)}
\rightarrow 1,\ \ \text{as }\ \varepsilon\to 0
\]
showing that the constant $1$ is optimal.
\end{proof}

\begin{rmk} 
	In our proof of the copositivity of $H_d(x)$ in the case $d=3$, we implicitly used invariance properties of this polynomial. Note that the theory of invariant polynomials is a  well-developed part of modern algebra. Since $x$ can be interpreted as a tensor of size $\underbrace{2 \times \cdots \times 2}_d$, the hypercube polynomial $H_d(x)$ is a function of a tensor. Another well-known invariant polynomial, which is a function of a tensor, is the hyperdeterminant, see \cite{GKZ}. Polynomial $H_d(x)$ shares the following properties with the square of the hyperdeterminant: (i) If a $\underbrace{2 \times \cdots \times 2}_{d-1}$ layer of $H_d(x)$ is equal to zero, then $H_d(x)$ is zero; (ii)  
$H_d(x)$ is invariant under the natural action of the group of the $2^d \cdot d!$ symmetries of the $d$-dimensional cube. 

	Although $H_2(x)$ is precisely the square of the $2 \times 2$ determinant, the connection to the hyperdeterminant for $d \ge 3$  is yet to be understood. For $d=3$, we would like to mention that the hyperdeterminant is a homogeneous polynomial in the variables of any given $2 \times 2$ layer, but $H_3(x)$ does not have this property. Nevertheless, it is suggestive that the theory laid out in sources like \cite{GKZ}  might be helpful for verifying copositivity %(TODO INTRODUCE COPOSITIVITY) 
    of $H_d(x)$ for higher values of $d$. 
	
%	It would be interesting to understand the symmetry structure of the hypercube polynomials more thoroughly. Currently, we only have a few observations that do not provide a complete clarification but merely suggest certain directions one may explore. 
%	Note that the hypercube polynomial $H_d(x)$ with $d=2$ is the square of the $2 \times 2$ determinant:
%	\[
%		H_2(x) = \det \begin{pmatrix} x_0 & x_1 \\ x_2 & x_3 \end{pmatrix} ^2. 
%	\]
%	This gives rise to various invariance properties of $H_2(x)$. For $d=2$, we can view $x$ as  a matrix, while multiplication of a row or a column of $x$ by $\lambda \in \R$ results into scaling $H_2(x)$ by factor $\lambda^2$. Furthermore, permutation of rows or columns of $x$ keeps $H_2(x)$ invariant, because the determinant is an alternating function with respect to these operations. When we move to the case $d=3$ and consider $H_3(x)$, we can view $x$ as a $2 \times 2 \times 2$ tensor. Here, one has analogous invariance properties. Multiplication of a $2 \times 2$ layer of the tensor $x$ by $\lambda \in \R$ results into scaling $H_3(x)$ by factor $\lambda^2$, while permutation of any two parallel $2 \times 2$ layers  keeps $H_3(x)$ unchanged. So, $H_3(x)$ have similar invariance properties to the square of the $2 \times 2 \times 2$ hyper-determinant. The theory of hyper-determinants is laid out in the book of Gelfand \& Kapranov \& Zelevinskiy. We have been able to find a  natural relation of $H_3(x)$ to the square of hyper-determinant, but investigating such a relation would be an interesting research objective. 
\end{rmk} 

\begin{rmk} 
	Systematic approaches to certifying non-negativity of polynomials are phrased as so-called Positivstellensätze in real algebra. For our purposes, we need certificates for certain spaces of invariant polynomials. Positivstellensätze in the invariant setting have been studied by different researchers such as Cordian Riener, Claus Scheiderer et al.~\cite{DR,HMR,Schei}. %TODO FIND LITERATURE)
    If a Positvistellensatz for degree $6$ copositive polynomials of the $2 \times 2 \times 2$ tensor which are invariant under the $48$ symmetries of the $3$-dimensional cube were known, %we could have used it for showing the copositivity of $H_3(x)$. 
%\Ivan{How about something like  "
it would have provided a more structural proof of the copositivity of $H_3(x)$, potentially applicable to $H_d(x)$ for $d>3$.  %"? }
	\end{rmk}

\section{Towards the Hypercube Inequality: \\ computational attempts and limitations}

In this section we describe the computational and analytical approaches we used to attack the hypercube inequality \eqref{eq:HI_d=3}, and we discuss why a straightforward extension to higher dimensions remains out of reach.

\subsection{The polynomial and its reduction}

As shown in Section~\ref{sec:HIvsPI}, for $d=3$ the hypercube inequality reduces to proving that a certain polynomial $H_3(x)$ in eight non‑negative variables $x_0,\dots,x_7$ is non‑negative. The polynomial $H_3$ has 692 terms, each a monomial of degree six. The reduction from the geometric inequality to $H_3(x)\ge 0$ is explicit and uses only the fact that the normalized volume of a simplex with vertices among the cube vertices is either $1$ or $2$ (see Figure~\ref{fig:simplex} and the accompanying case analysis). Thus, the problem becomes purely algebraic. We needed to verify $H_3(x)\ge 0$ for all $x_i\ge 0$.

\subsection{Numerical evidence and the SOS approach}

Our first move was to test numerically whether $H_3$ might be a sum of squares – a standard way to certify nonnegativity, and something we can attempt using semidefinite programming.
Using the Julia package \texttt{SumOfSquares} as well as the Python libraries \texttt{cvxopt} and \texttt{cvxpy}, we successfully computed a numerical SOS decomposition: the solver returned floating‑point coefficients that appeared to satisfy the equality $H_3(x)= \sum_{i,j=0}^7 x_ix_jS_{ij}(x)$ for
$S_{ij}$ being sums of squares of degree 4, up to a small numerical error. This gave us strong confidence that $H_3$ is indeed copositive.

However, turning this numerical certificate into an exact algebraic proof turned out to be difficult. The floating‑point coefficients were not rational numbers with a simple pattern; they seemed to be approximations of algebraic numbers. We attempted to round them to exact rational numbers and verify the equality symbolically, but rounding broke the SOS representation. We also tried to reconstruct exact coefficients by solving the SOS system over $\mathbb{Q}$ using rational arithmetic, but the size of the linear system (692 terms and several hundred candidate squares) made this computationally prohibitive. In the end, we had to abandon the purely SOS‑based approach and look for a different method.

%\subsection{Symbolic verification and manual calculations}
%
%Since $H_3$ is not that huge (only 692 terms), we could work with it symbolically in SageMath. We used symbolic expansion, factorization, and substitution to test various candidate identities. Many of the manual manipulations we performed involved long polynomials; to avoid mistakes we routinely checked intermediate results with the computer. For instance, we verified the bi‑component decomposition of $H_3$ (a splitting into parts that behave nicely under the symmetry group) with the help of a computer script. This allowed us to double‑check our hand calculations and gave us confidence that no algebraic error had crept in.
%
%We also explored the Newton polytope of $f$. Locating the exponent vectors that correspond to negative coefficients allowed us to identify the troublesome monomials. For instance, setting two variables to zero (e.g., $x_0 = x_1 = 0$), we were able to derive a sum‑of‑squares certificate for the resulting volume polynomial.

\subsection{Exploiting symmetry}

The polynomial $H_3$ is invariant under the full symmetry group of the cube (the octahedral group of order $48$). In principle, such symmetry can be used to reduce the number of variables or to decompose $H_3$ into irreducible invariant components. We computed generators of the invariant ring using the Macaulay2 package \texttt{InvariantRing} (with thanks to Fred Galetto for showing us how to use it). However, the resulting expressions in terms of fundamental invariants turned out to be quite complicated and did not directly lead to a simple proof.

Nevertheless, a smaller symmetry turned out to be sufficient: the decomposition we eventually found by hand is invariant under a subgroup of order $16$, namely the subgroup $D_4\times S_2$. Exploiting this smaller group rendered the algebra tractable.

\subsection{An experiment with ChatGPT}

During this work, we experimented with large language models to generate creative ideas. We asked for a change of variables that would simplify the polynomial $H_3$ in $x_0,\dots,x_7$. After several hours of interaction, ChatGPT proposed a substitution that reduced the number of terms from $692$ to $28$. The substitution was remarkably clever, producing many terms that appeared as differences of squares. However, the substitution did \emph{not} preserve the non‑negativity of the original variables, so we could no longer rely on $x_i\ge 0$. We tried to combine the $28$ terms by hand to obtain a sum‑of‑squares decomposition, but the expressions quickly became intractable. We also attempted to write the polynomial as a sum of squares plus an explicitly non‑negative remainder, but without sign constraints on the new variables this approach failed. In the end, the substitution did not yield a complete proof, despite its algebraic elegance. The interaction was stimulating but taught us that a clever change of variables is only half the battle: one must also understand how the domain transforms.

\subsection{Limitations for higher dimensions}

One might ask whether the same computational approach can prove the hypercube inequality for $d=4$ (which would correspond to the Bézout‑type inequality in dimension $n=5$). The polynomial in that case would have $2^4 = 16$ variables, degree $8$, and approximately 400,000 terms. Even storing such a polynomial in memory is challenging, and manipulating it symbolically is nearly impossible on standard hardware. While we were still able to compute the bi‑components (a relatively cheap operation), each such computation took about five minutes, and exploring the full space of possible decompositions is out of the question. For $d\ge 4$, the hypercube inequality remains open, and purely computational methods seem unlikely to settle it with current technology.

\subsection{Reflections on polynomial optimisation in mathematical research}

Our experience illustrates both the power and the pitfalls of using computer‑assisted methods for exact inequalities. Polynomial optimisation, backed by decades of development in semidefinite programming and real algebraic geometry, offers a principled way to verify non‑negativity. In engineering contexts, where data are inherently noisy and floating‑point answers are acceptable, these methods are already standard. For pure mathematics, however, we require an exact certificate – a rational or algebraic proof. The gap between a numerical SOS decomposition and an exact one is often wide. Moreover, even obtaining a reliable numerical solution can be challenging because of numerical stability issues, memory limitations, and the fact that global optimisation is intrinsically hard.

What skills does a researcher need to use these tools effectively? Familiarity with convex optimisation and algebraic geometry is helpful, but so is a willingness to write scripts, parse symbolic output, and – when necessary – fall back on problem‑specific insights (such as symmetry reduction or clever substitutions). Good software exists (e.g., \texttt{SumOfSquares}, \texttt{cvxopt}, \texttt{cvxpy}, SageMath, Macaulay2), but no single package solves everything automatically. Often the user must tailor the computation to the problem at hand.

Our hope is that this case study – a concrete polynomial inequality arising from 
convex geometry – can serve as a documented example for others who wish to combine computational and theoretical methods. The field of polynomial optimisation continues to advance, and as algorithms and hardware improve, problems that are out of reach today may become tractable tomorrow. For now, the hypercube inequality in dimension $d=4$ remains a fascinating open challenge, waiting for a new idea – perhaps one that, like the ChatGPT substitution, comes from an unexpected source.


\begin{thebibliography}{10}

\bibitem[AAGJMR]{AAGJMR} Alonso-Gutiérrez, D., Artstein-Avidan, S., González Merino, B., Jiménez, C. H., and Villa, R. (2019). Rogers-Shephard and local Loomis-Whitney type inequalities. Math. Ann. 374(3-4), 1719–1771.

\bibitem[ABBC]{ABBC} Alonso-Gutiérrez, D., Bernués, J., Brazitikos, S., and Carbery, A. (2021). On affine invariant and local Loomis–Whitney type inequalities. J. Lond. Math. Soc. 103(4), 1377–1401.

\bibitem[AGM]{AGM} Alías, L. J., González Merino, B., and Marín Gimeno, B. (2025). On local Liakopoulos-Meyer type inequalities and their functional counterparts. arXiv:2512.02761. \url{https://arxiv.org/abs/2512.02761}.

\bibitem[AH]{AH} Adiprasito, K. and Huh, J. (2021). Log-concavity of matroid basis generating functions. Ann. Math. (2) 194, 819–868.

\bibitem[AS]{AS} Averkov, G. and Soprunov, I. (2025). An algebraic-combinatorial proof of a Bezout-type inequality for mixed volumes of three-dimensional zonoids. Discrete \& Computational Geometry, 1432-0444.

%\bibitem[B]{B} Bernstein, D. N. (1975). The number of roots of a system of equations. Funkcional. Anal. i Priložen. 9, 1–4.

%\bibitem[Be]{Be} Bézout, É. (1779). Théorie générale des équations algébriques. Paris: Ph.-D. Pierres.

\bibitem[BG]{BG} Brazitikos, S. and Giannopoulos, A. (2018). Uniform cover inequalities for the volume of coordinate sections and projections of convex bodies. Adv. Geom. 18, 345–354.

\bibitem[BGM]{BGM} Brandenberg, R. and González Merino, B. (2017). A complete 3-dimensional Blaschke-Santaló diagram. Math. Inequal. Appl. 20(2), 301–348.

\bibitem[BGR1]{BGR1} Brandenberg, R., González Merino, B., and Runge, M. (2025). A complete system of inequalities for the diameter, in- and circumradius in the 3-dimensional Euclidean space. arXiv:2509.05028 [math.MG].

\bibitem[BGR2]{BGR2} Brandenberg, R., González Merino, B., and Runge, M. (2026). Minimization of the inradius of convex bodies for prescribed diameter and circumradius in Minkowski spaces. arXiv:2606.15823 [math.MG].

\bibitem[BH]{BH} Brändén, P. and Huh, J. (2020). Lorentzian polynomials. Ann. Math. (2) 192, 821–891.

\bibitem[BBLM]{BBLM} Breiding, P., Bürgisser, P., Lerario, A., and Mathis, L. (2022). The zonoid algebra, generalized mixed volumes, and random determinants. \emph{Adv. Math.} 402, 108361.


\bibitem[Bjo]{Bjo} Björner, A., Las Vergnas, M., Sturmfels, B., White, N., and Ziegler, G. M. (1999). Oriented Matroids. 2nd ed. Cambridge University Press.

\bibitem[Bl]{Bl} Blaschke, W. (1916). Eine Frage \"uber konvexe K\"orper. Jahresber. Deutsch. Math.-Verein. 25, 121–125.

\bibitem[BR]{BR} Brandenberg, R. and Runge, M. (2023). Blaschke–Santaló diagrams for different diameter variants. Math. Inequal. Appl. (to appear).

\bibitem[BT]{BT} Bollobás, T. and Thomason, A. (1995). Projections of bodies and hereditary properties of hypergraphs. Bull. London Math. Soc. 27, 417–424.

\bibitem[CC]{CC} Costa, M. H. and Cover, T. M. (1984). On the similarity of the entropy power inequality and the Brunn–Minkowski inequality. IEEE Trans. Inform. Theory 30, 837–839.

\bibitem[CLS]{CLS} Cox, D. A., Little, J. B., and Schenck, H. K. (2011). Toric Varieties. Graduate Studies in Mathematics, Vol. 124. American Mathematical Society, Providence, RI.

\bibitem[DCT]{DCT} Dembo, A., Cover, T. M., and Thomas, J. A. (1991). Information-theoretic inequalities. IEEE Trans. Inform. Theory 37, 1501–1518.

\bibitem[DR]{DR} Debus, S. and Riener, C. (2023). Reflection groups and cones of sums of squares. J. Symb. Comput. 117, Article 102243.

%\bibitem[Fe]{Fe} Fenchel, M. W. (1936). Généralisation du théoréme de Brunn et Minkowski concernant les corps convexes. C. R. Acad. Sci. Paris 203, 764–766.

\bibitem[Ful]{Ful} Fulton, W. (1993). Introduction to Toric Varieties. Annals of Mathematics Studies, Vol. 131. Princeton University Press, Princeton, NJ.


\bibitem[FHL]{FHL} Ftouhi, I., Henrot, A., and Lamboley, J. (2025). Improved description of Blaschke–Santaló diagrams via numerical shape optimization. Appl. Math. Optim. 91, Article 55.

\bibitem[FHMNWZ]{FHMNZ} Fradelizi, M., Hubard, A., Manui, A., Ndiaye, C. S., Wang S., and Zvavitch, A. (2026). Volume and Projection Inequalities I: Zonoids and Courtade's Conjecture. 	arXiv:2608.12681 [math.MG]

\bibitem[FLP]{FLP} Ftouhi, I. and Lamboley, J. (2022). Blaschke–Santaló diagrams and other shape optimization problems. Theses. hal-03252870.

\bibitem[FMMZ]{FMMZ} Fradelizi, M., Madiman, M., Meyer, M., and Zvavitch, A. (2024). On the volume of the Minkowski sum of zonoids. J. Funct. Anal. 286(3), Paper No. 110247, 41.

\bibitem[FMZ]{FMZ} Fradelizi, M., Madiman, M., and Zvavitch, A. (2024). Sumset estimates in convex geometry. Int. Math. Res. Not. 15, 11426–11454.

\bibitem[Fto]{Fto} Ftouhi, I. (2024). Numerical exploration of the range of shape functionals via Blaschke–Santaló diagrams. Preprint.

%\bibitem[G]{G} Gurvits, L. (2009). On multivariate Newton-like inequalities. In: Advances in Combinatorial Mathematics, pp. 61–78. Springer, Berlin.

\bibitem[GKZ]{GKZ} Gelfand, I. M., Kapranov, M. M., and Zelevinsky, A. V. (1994). \emph{Discriminants, resultants, and multidimensional determinants}. Birkhäuser, Boston.

\bibitem[H]{H} Heine, R. (1938). Der Wertvorrat der gemischten Inhalte von zwei, drei und vier ebenen Eibereichen. Math. Ann. 115, 115–129.

\bibitem[Ha]{Ha} Harris, J. (1992). Algebraic Geometry: A First Course. Graduate Texts in Mathematics, Vol. 133. Springer.

\bibitem[HC]{HC} Hernández Cifre, M. A. (2000). Is there a planar convex set with given width, diameter, and inradius? Amer. Math. Monthly 107(10), 893–900.

%\bibitem[HCS]{HCS} Hernández Cifre, M. A., Salinas, G., and Segura, S. (2001). Complete systems of inequalities. J. Inequal. Pure Appl. Math. 2, 1–12.

\bibitem[HKL]{HKL} Hudelson, M., Klee, V., and Larman, D. (1996). Largest \(j\)-simplices in \(d\)-cubes: Some relatives of the Hadamard determinant problem. Linear Algebra Appl. 241–243, 519–598.

\bibitem[HMR]{HMR} Hubert, E., Metzlaff, T., Moustrou, P., and Riener, C. (2023). Optimization of trigonometric polynomials with crystallographic symmetry and spectral bounds for set avoiding graphs. arXiv:2303.09487. \url{https://arxiv.org/abs/2303.09487}.

%\bibitem[Kh]{Kh} Khovanskii, A. G. (1978). Newton polyhedra and the genus of complete intersections. Funkcional. Anal. i Priložen. 12(1), 51–61.

%\bibitem[KK]{KK} Kaveh, K. and Khovanskii, A. G. (2012). Algebraic equations and convex bodies. In: Perspectives in Analysis, Geometry, and Topology, pp. 263–282. Springer.

%\bibitem[Ku]{Ku} Kušnirenko, A. G. (1976). Newton polytopes and the number of solutions of a system of equations. Funkcional. Anal. i Priložen. 10(3), 82–83.

\bibitem[L]{L} Liakopoulos, D. M. (2019). Reverse Brascamp–Lieb inequality and the dual Bollobás–Thomason inequality. Arch. Math. 112(3), 293–304.

\bibitem[LW]{LW} Loomis, L. H. and Whitney, H. (1949). An inequality related to the isoperimetric inequality. Bull. Amer. Math. Soc. 55, 961–962.

\bibitem[M]{M} Meyer, M. (1988). A volume inequality concerning sections of convex sets. Bull. London Math. Soc. 20, 151–155.

\bibitem[Ma]{Ma} Marshall, M. (2008). Positive Polynomials and Sums of Squares. American Mathematical Society.

\bibitem[Md]{Md} Madiman, M. (2015). A survey of information theory and convex geometry. In: Information Theory and Applications Workshop, pp. 1–6.

\bibitem[MNZ]{MNZ} Manui, A., Ndiaye, C. S., and Zvavitch, A. (2024). On the volume of sums of anti-blocking bodies. arXiv:2409.14214. \url{https://arxiv.org/abs/2409.14214}.

%\bibitem[Mo]{Mo} Mondal, P. (2021). How many zeroes? Counting solutions of systems of polynomials via toric geometry at infinity. Springer.

\bibitem[Mur]{Mur} Murota, K. (2003). Discrete Convex Analysis. SIAM.

\bibitem[NWZ]{NWZ} Neubauer, M. G., Watkins, W., and Zeitlin, J. (1997). Maximal \(j\)-simplices in the real \(d\)-dimensional unit cube. J. Comb. Theory Ser. A 80(1), 1–12.

\bibitem[Ox]{Ox} Oxley, J. (2011). Matroid Theory. 2nd ed. Oxford University Press.

\bibitem[Re]{Re} Reznick, B. (2000). Some concrete aspects of Hilbert's 17th problem. In: Real Algebraic Geometry and Ordered Structures, Contemp. Math. 253, 251–272.

\bibitem[Sa]{Santalo61} Santaló, L. A. (1959/61). On complete systems of inequalities between elements of a plane convex figure. Math. Notae 17, 82–104.

\bibitem[Sch]{Sch} Schneider, R. (2014). Convex Bodies: The Brunn-Minkowski Theory. Encyclopedia of Mathematics and its Applications, Vol. 151, 2nd edn. Cambridge University Press, Cambridge.

\bibitem[Schei]{Schei} Scheiderer, C. (2012). A Positivstellensatz for projective real varieties. Manuscr. Math. 138(1-2), 73–88.

\bibitem[Sh]{Sh} Shephard, G. C. (1960). Inequalities between mixed volumes of convex sets. Mathematika 7, 125–138.

\bibitem[SY]{SY} Sanwine-Yager, J. R. (1989). The missing boundary of the Blaschke diagram. Amer. Math. Monthly 96, 233–237.

\bibitem[Sko]{Sko} Skorupinski, R. (2026). Zonoid volumes are not log-submodular. arXiv:2608.07702 [math.MG].

\bibitem[SZ]{SZ} Soprunov, I. and Zvavitch, A. (2016). Bézout inequality for mixed volumes. Int. Math. Res. Not. 2016(23), 7230–7252.

\bibitem[V]{V} Vitale, R. A. (1991). Expected absolute random determinants and zonoids. Ann. Appl. Probab. 1(2), 293–300.

%\bibitem[Vo]{Vo} Vondrák, J. (2017). Submodular functions. Lecture notes, Math 233B, Stanford University. \url{https://theory.stanford.edu/~jvondrak/MATH233B-2017/lec14.pdf}

\end{thebibliography}
\end{document}